\title[Topological description of the Borel probability space]{Topological description of the Borel probability space}
\author{Liangang Ma}
\address{Dept.\ of Mathematical Sciences, Binzhou University, Huanghe 5th Road No. 391, Binzhou 256600, Shandong, P. R. China} 
\email{maliangang000@163.com}
\thanks{The work is supported by ZR2019QA003 from SPNSF and 12001056 from NSFC}    
\newtheorem{theorem}[subsection]{Theorem}
\newtheorem{lemma}[subsection]{Lemma}
\newtheorem{Young's Lemma}[subsection]{Young's Lemma}
\newtheorem{Urysohn's Lemma}[subsection]{Urysohn's Lemma}
\newtheorem{SSU Theorem}[subsection]{Simon-Solomyak-Urba\'nski Theorem}
\newtheorem{Prohorov's Theorem}[subsection]{Prohorov's Theorem}
\newtheorem{proposition}[subsection]{Proposition}
\newtheorem{pro}[subsection]{Problem}
\newtheorem{corollary}[subsection]{Corollary}
\newtheorem{definition}[subsection]{Definition}
\newtheorem{exm}[subsection]{Example}
\newtheorem{rem}[subsection]{Remark}
\numberwithin{equation}{section}
\begin{document} 

\begin{abstract}
   
We study properties of some popular topology on the space of Borel probabilities on a topological ambient space in this paper.  We show that the two types of popular vague topology are equivalent to each other in case the ambient space is LCH. The two types of setwise topology induced from two equivalent descriptions of setwisely sequential convergence of probability measures are also equivalent to each other regardless of the topology on the ambient space. We give explicit conditions for the two types of vague topology and the two types of setwise topology to be separable or metrizable on the space of Borel probabilities. These conditions are either in terms of the cardinality of the elementary events in the Borel $\sigma$-algebra or some direct topological assumptions on the ambient space. We give an necessary and sufficient condition for families of probability measures to be setwisely relatively compact in case the ambient space is a compact metric space. There are some extending problems and heuristic schemes on formulating new topologies on the space of Borel probabilities at the end of the work.   

\end{abstract}
 
 \maketitle

\section{Introduction}

This work addresses the following question: Let $X$ be a topological space along with its Borel $\sigma$-algebra $\mathcal{B}$, consider the collection of all the probability measures $\mathcal{M}(X)$ on $(X,\mathcal{B})$, how to describe the topological structure of  $\mathcal{M}(X)$? 

The subtlety of the topology on $(X,\mathcal{B})$ is revealed when one considers problems related to regularity (for example, continuity) of mapping on the probability space $\mathcal{M}(X)$, which is even crucial in attacking these problems in some cases. For long time importance of the weak topology is recognized in its various applications. For example, see its application in interactive particle systems by J. T. Cox, A. Klenke and E. A. Perkins on a locally compact Polish space in \cite{CK, CKP}. The solution of problems due to applications of various other topology in due course indicates the importance of these topology on $\mathcal{M}(X)$. While various other topologies on $\mathcal{M}(X)$ are highlighted in their applications, we limit our attention to the vague, weak, setwise and TV topology in this work.   

The vague topology on $\mathcal{M}(X)$ is the coarsest topology among the four. We focus on two types of vague topology on $\mathcal{M}(X)$. One is in the sense of Kallenberg \cite{Kallen1, Kallen4}, another is in the sense of Folland \cite{Fol}. We will show the equivalence of the two types of vague topology in case of the ambient space $X$ is locally compact Hausdorff (LCH). We also study some topological properties of $\mathcal{M}(X)$ under the vague topology, such as its separability and metrizability. 

Setwise topology appears as a finer topology than the weak topology on $\mathcal{M}(X)$. The sequential convergence under this topology is a more demanding property than under the weak topology. See its application in the Markov decision processes by E. Feinberg, P. Kasyanov and M. Zgurovsky \cite{FK, FKL, FKZ3, FKZ4}, in the Markov chains by O. Hern\'andez-Lerma and J. Lasserre \cite{HL1, HL3}. The notion is further exploited by the author into the dimensional theory of (families of) iterated function systems (IFS), which extends some important results in this field \cite{Ma1, Ma2}. 


We derive two types of setwise topology, which follow naturally from two equivalent descriptions of sequential setwise convergence of probability measures on $\mathcal{M}(X)$. The two types of setwise topology are always equivalent to each other. The separability and metrizability of $\mathcal{M}(X)$ under the setwise topology are decided by the cardinality of elementary events in the Borel $\sigma$-algebra $\mathcal{B}$. We consider the setwisely relative compactness of families of probabilities in $\mathcal{M}(X)$ following Billingsley and Kallenberg. 

There are three notes  for the readers. The first one is on the ambient space $X$. We try to set our results on general topological ambient spaces, while some results do require some assumptions on the ambient spaces. Notable assumptions are those requiring $X$ to be LCH, compact, metrizable, separable, normal or complete.

The second note is that although we pay main attention to the vugue and setwise topology on $\mathcal{M}(X)$ (partly because the weak topology is well-studied in various circumstances), we hope it will shed some light on properties of other topology, as well as comparisons of different types of topology induced from equivalent descriptions of convergence of sequences of probability measures on $\mathcal{M}(X)$.

The last one is on the scope of measures that our results apply. Let $\hat{\mathcal{M}}(X)$  be the collection of all finite measures on $(X,\mathcal{B})$. Some notions and results extend naturally from $\mathcal{M}(X)$ to $\hat{\mathcal{M}}(X)$ by normalization, or even to the space of all measures (including infinite ones) in some cases.

The organization of the paper is as following. In Section 2 we introduce the vague, weak, setwise and TV topology on $\mathcal{M}(X)$. These definitions are induced naturally from the corresponding sequential convergence of probability measures in $\mathcal{M}(X)$. The main results of the work are presented in this section-Theorem \ref{thm4}, \ref{thm3}, \ref{thm2}, \ref{thm7}, \ref{thm8}, \ref{thm1}.  Section 3 is devoted to the proof of Theorem \ref{thm4} and \ref{thm7}, which compare different types of vague and setwise topology on $\mathcal{M}(X)$. In Section 4 we prove Theorem \ref{thm3}, \ref{thm2} and \ref{thm8}, which give conditions on the separability and metrizability of $\mathcal{M}(X)$ under the vague or setwise topology in due course. Section 5 is devoted to the proof of Theorem \ref{thm1} on the setwisely relative compactness of families of probabilities in $\mathcal{M}(X)$. In section 6 we indicate some further problems on the notable topology on $\mathcal{M}(X)$. In the last section we propose some open schemes to defining more subtle topologies in ones' needs.  


\section{The vague, weak, setwise, TV topology on $\mathcal{M}(X)$ and the main results}

We introduce concepts of the vague, weak, setwise and total-variation (TV) topology on $\mathcal{M}(X)$ in this section. These notions differ from each other in fineness. One can expect more desiring properties for convergent sequences of probability measures under finer topology. For example, the \emph{uniform Fatou lemma} holds for convergent sequences of finite measures under the TV topology, but it does not hold for convergent sequences of finite measures under the setwise topology on $\mathcal{M}(X)$ \cite{FKZ2}.

We merely assume the ambient space $X$ is a topological space with its Borel $\sigma$-algebra $\mathcal{B}$, in general. Some assumptions on the ambient space $X$ are required in order to deduce some particular results. For integrations on a general measure space, see \cite{Li} or \cite[Chapter 3]{Tay}.  A function $f: X\rightarrow\mathbb{R}$ is said to \emph{vanish at infinity} if 
\begin{center}
$f^{-1}\big((-\infty,-\epsilon]\cup[\epsilon,\infty)\big)$ 
\end{center}
is a compact set in $X$ for any $\epsilon>0$. The \emph{support} of a function $f: X\rightarrow\mathbb{R}$ is defined to be the closure
\begin{center}
$Supp(f):=\overline{f^{-1}\big((-\infty,0)\cup(0,\infty)\big)}$.  
\end{center} 
We highlight the following families of testing functions in this work.

\begin{itemize}
\item $C(X)=\{f: f \mbox{ is a continuous function from } X \mbox{ to } \mathbb{R}\}.$
\item $C_b(X)=\{f: f \mbox{ is a bounded continuous function from } X \mbox{ to } \mathbb{R}\}$.
\item $C_0(X)=\{f: f \mbox{ is a continuous function from } X \mbox{ to } \mathbb{R} \mbox{ vanishing  at infinity} \}$.
\item $C_{sc}(X)=\{f: f \mbox{ is a continuous function from } X \mbox{ to } \mathbb{R} \mbox{ with compact support}\}$.
\item $C_{sb}(X)=\{f: f \in C(X) \mbox{ has bounded support in case } X \mbox{ is metrizable}\}$.
\item $M_b(X)=\{f: f \mbox{ is a bounded measurable function from } X \mbox{ to } \mathbb{R}\}$.
\item $M_1(X)=\{f: f \mbox{ is a measurable function from } X \mbox{ to } [-1,1]\}$.
\end{itemize}
Obviously we can see that,
\begin{equation}\label{eq1}
M_1(X)\cup C_b(X)\subset M_b(X) \mbox{ and } C_{sc}(X)\cup C_{sb}(X)\subset C_0(X)\subset C_b(X)\subset C(X).
\end{equation}
If the ambient space $X$ is a metric space satisfying the \emph{Heine-Borel Property} \cite{JW}, then 
\begin{center}
$C_{sc}(X)=C_{sb}(X)$.
\end{center}
Refer to \cite{Kallen1} for  testing functions in the family $C_{sb}(X)$.

We pay attention to the following two types of vague topology on $\mathcal{M}(X)$ in this work, following  two popular notions of vaguely sequential convergence of probability measures in $\mathcal{M}(X)$.
\begin{definition}
The \emph{Type-I vague topology} $\mathfrak{W}_{v1}$ on $\mathcal{M}(X)$ is the topology with basis 
\begin{center}
$W_{v1}(\nu,f, \epsilon)=\{\varrho\in\mathcal{M}(X): |\int_X f(x) d\varrho-\int_X f(x) d\nu|<\epsilon \}$
\end{center}
for any $f\in C_{sc}(X)$ and any real $\epsilon>0$. 
\end{definition}

This type of vague topology is induced from the vaguely sequential convergence of probability measures in \cite[Chapter 4]{Kallen1} and \cite[Definition 13.12]{Kle}.

\begin{definition}
The \emph{Type-II vague topology} $\mathfrak{W}_{v2}$ on $\mathcal{M}(X)$ is the topology with basis 
\begin{center}
$W_{v2}(\nu,f, \varepsilon)=\{\varrho\in\mathcal{M}(X): |\int_X f(x) d\varrho-\int_X f(x) d\nu|<\epsilon \}$
\end{center}
for any $f\in C_0(X)$ and any real $\epsilon>0$. 
\end{definition}

This type of vague topology is induced from the vaguely sequential convergence of probability measures in \cite{Fol} and \cite{Las1}. It is obvious that the Type-II vague topology is finer than Type-I vague topology since $C_{sc}(X)\subset C_0(X)$. Our first result shows that the fineness is not strict in some cases.
\begin{theorem}\label{thm4}
If the ambient space $X$ is an LCH space, then the  Type-II vague topology $\mathfrak{W}_{v2}$ is equivalent to the Type-I vague topology $\mathfrak{W}_{v1}$ on $\mathcal{M}(X)$.
\end{theorem}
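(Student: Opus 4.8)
The plan is to use the fact that, for an LCH ambient space, $C_{sc}(X)$ is uniformly dense in $C_0(X)$, together with the elementary observation that probability measures have total mass one, so that uniform closeness of integrands forces closeness of integrals. Since $C_{sc}(X)\subset C_0(X)$ the inclusion $\mathfrak{W}_{v1}\subseteq\mathfrak{W}_{v2}$ is automatic, and the whole task is to show that every generator $W_{v2}(\nu,f,\epsilon)$ of $\mathfrak{W}_{v2}$ is already open in $\mathfrak{W}_{v1}$. So I fix $f\in C_0(X)$, $\nu\in\mathcal{M}(X)$, $\epsilon>0$, an arbitrary point $\mu\in W_{v2}(\nu,f,\epsilon)$, and set $\eta:=\epsilon-\left|\int_X f\,d\mu-\int_X f\,d\nu\right|>0$.

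Next I would manufacture a good approximant $g\in C_{sc}(X)$ with $\|f-g\|_\infty\le\eta/3$. Because $f$ vanishes at infinity, the set $K:=f^{-1}\big((-\infty,-\eta/3]\cup[\eta/3,\infty)\big)$ is compact, so local compactness and the Hausdorff property produce an open $U\supseteq K$ with $\overline{U}$ compact, and then the LCH form of Urysohn's lemma produces a continuous $\phi\colon X\to[0,1]$ with $\phi\equiv1$ on $K$ and $\mathrm{Supp}(\phi)\subseteq\overline{U}$. Put $g:=f\phi$; then $g\in C_{sc}(X)$, and since $|f-g|=|f|\,|1-\phi|$ vanishes on $K$ and is bounded by $|f|<\eta/3$ off $K$, we indeed get $\|f-g\|_\infty\le\eta/3$.

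Then comes the routine estimate: for any $\varrho\in\mathcal{M}(X)$, using $\varrho(X)=1$,
\[
\Bigl|\int_X f\,d\varrho-\int_X f\,d\mu\Bigr|\le\|f-g\|_\infty+\Bigl|\int_X g\,d\varrho-\int_X g\,d\mu\Bigr|+\|g-f\|_\infty,
\]
so if $\varrho\in W_{v1}(\mu,g,\eta/3)$ the right-hand side is $<\eta$; combined with the triangle inequality and the definition of $\eta$ this yields $\left|\int_X f\,d\varrho-\int_X f\,d\nu\right|<\epsilon$, i.e. $\varrho\in W_{v2}(\nu,f,\epsilon)$. Hence $W_{v1}(\mu,g,\eta/3)$ is a $\mathfrak{W}_{v1}$-open neighbourhood of $\mu$ contained in $W_{v2}(\nu,f,\epsilon)$; since $\mu$ was arbitrary, $W_{v2}(\nu,f,\epsilon)\in\mathfrak{W}_{v1}$. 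As these sets generate $\mathfrak{W}_{v2}$, we conclude $\mathfrak{W}_{v2}\subseteq\mathfrak{W}_{v1}$, and therefore the two topologies coincide.

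The only genuinely nontrivial ingredient is the construction of the compactly supported bump function $\phi$, which is exactly where the LCH hypothesis enters; I expect invoking the appropriate Urysohn-type lemma to be the main point, while everything else is bookkeeping with the triangle inequality. It is also worth noting in passing that any $f\in C_0(X)$ is automatically bounded and Borel, so every integral above is finite and well defined, and that the same argument works verbatim for finite measures of uniformly bounded total mass, not merely for probability measures.
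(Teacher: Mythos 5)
Your proof is correct and follows essentially the same route as the paper: both use the LCH form of Urysohn's lemma to build a compactly supported bump $\phi$ equal to $1$ on the set where $|f|$ is not small, set $g=f\phi\in C_{sc}(X)$, and conclude by the triangle inequality using that probability measures have total mass one. Your version is in fact slightly more careful than the paper's, since you verify openness at an arbitrary point $\mu$ of $W_{v2}(\nu,f,\epsilon)$ rather than only at the center $\nu$.
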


Now rename the space $\mathcal{M}(X)$ as $\mathcal{M}_{v1}(X)$ and $\mathcal{M}_{v2}(X)$  under the topology $\mathfrak{W}_{v1}$ and $\mathfrak{W}_{v2}$ respectively. Our next result describes the separability and metrizability of the probability space $\mathcal{M}(X)$ under the vague topology. A set $A\in \mathcal{B}$ is called an \emph{elementary event} if it does not contain any non-empty proper subset $B\in\mathcal{B}$.

\begin{theorem}\label{thm3}
For an LCH space $X$, $\mathcal{M}_{v1}(X)$ ($\mathcal{M}_{v2}(X)$) is separable and metrizable if the Borel $\sigma$-algebra $\mathcal{B}$ admits at most countably many elementary events.  
\end{theorem}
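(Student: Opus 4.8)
The plan is, first, to observe that under the standing hypothesis that $X$ is LCH the condition on $\mathcal{B}$ is really a condition on the cardinality of $X$. Since an LCH space is Hausdorff, hence $T_{1}$, every singleton $\{x\}$ is closed and so lies in $\mathcal{B}$, and it is automatically an elementary event because its only subsets are $\emptyset$ and $\{x\}$. Distinct points give distinct elementary events, so "at most countably many elementary events" forces $X$ to be countable; conversely, if $X$ is countable then $\mathcal{B}=2^{X}$ and the elementary events are precisely the singletons. Hence the theorem is equivalent to the assertion that $\mathcal{M}_{v1}(X)$ (equivalently $\mathcal{M}_{v2}(X)$) is separable and metrizable whenever $X$ is a countable LCH space, and this is what I would prove.

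The next step is to upgrade "countable LCH" to "second countable and metrizable". Each $x\in X$ has a compact neighbourhood $N_{x}$; since $N_{x}$ is a countable $T_{1}$ space, $\{x\}=\bigcap_{y\neq x}(N_{x}\setminus\{y\})$ is a $G_{\delta}$ in $N_{x}$, and since $N_{x}$ is compact Hausdorff it is normal, so by Urysohn's lemma one obtains a decreasing sequence of open sets $V_{n}$ with $\{x\}\subseteq V_{n}\subseteq\overline{V_{n}}\subseteq U_{n}$ where $\bigcap_{n}U_{n}=\{x\}$; a finite-intersection argument using compactness of $N_{x}$ shows $\{V_{n}\}$ is a neighbourhood base at $x$ in $N_{x}$, hence in $X$. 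Thus $X$ is first countable, and being countable it is second countable; being also locally compact Hausdorff (hence regular) it is metrizable by the Urysohn metrization theorem. Consequently $X$ is a locally compact, second countable metric space, its one-point compactification $X^{*}$ is compact metrizable, and $C_{0}(X)$ is a separable Banach space under the sup norm, with $C_{sc}(X)$ a dense subspace.

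With this in hand I would invoke Theorem \ref{thm4} (applicable since $X$ is LCH) to reduce to $\mathcal{M}_{v2}(X)$. By definition $\mathfrak{W}_{v2}$ is the initial topology on $\mathcal{M}(X)$ generated by the maps $\mu\mapsto\int_{X}f\,d\mu$, $f\in C_{0}(X)$, i.e.\ the restriction to $\mathcal{M}(X)$ of the weak-$*$ topology of $C_{0}(X)^{*}$ under $\Phi\colon\mu\mapsto(f\mapsto\int_{X}f\,d\mu)$. This $\Phi$ is injective (on the second countable LCH, hence Polish, space $X$ every Borel probability is Radon, and distinct Radon probabilities are already separated by $C_{sc}(X)$ via inner regularity and Urysohn functions), its image lies in the closed unit ball $B$ of $C_{0}(X)^{*}$ since $|\int f\,d\mu|\le\|f\|_{\infty}$, and it is a homeomorphism onto its image because $\mathfrak{W}_{v2}$ is exactly the initial topology for these maps. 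Now $B$ is weak-$*$ compact by Banach--Alaoglu and weak-$*$ metrizable because $C_{0}(X)$ is separable, hence $B$ is compact metrizable, hence separable metrizable; therefore so is its subspace $\mathcal{M}_{v2}(X)$, and by Theorem \ref{thm4} so is $\mathcal{M}_{v1}(X)$.

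I expect the main obstacle to be the second step, passing from "countable LCH" to "metrizable/second countable", since this is the only genuinely topological input; the embedding bookkeeping in the third step and the functional-analytic facts (Banach--Alaoglu, metrizability of the weak-$*$ ball for separable preduals, heredity of separability and metrizability for subspaces of metric spaces) are standard. An essentially equivalent route avoids $C_{0}(X)^{*}$ altogether: embed $\mathcal{M}(X)$ into $\mathcal{M}(X^{*})$ with its weak topology as the set of measures giving no mass to the point at infinity, and use that $\mathcal{M}(X^{*})$ is compact metrizable because $X^{*}$ is a compact metric space.
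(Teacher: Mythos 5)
Your proof is correct, but it takes a genuinely different route from the paper's. Your opening observation --- that in a $T_1$ (in particular LCH) space every singleton is a Borel set and trivially an elementary event, so the hypothesis forces $X$ to be countable --- is not made explicit in the paper and is a useful clarification of what the theorem actually covers; together with your second step it reduces the setting to a countable, second countable, locally compact metrizable ambient space. From there you use the weak-$*$ embedding $\mu\mapsto(f\mapsto\int_X f\,d\mu)$ into the unit ball of $C_0(X)^*$, Banach--Alaoglu, and metrizability of the ball for a separable predual; this yields more than the statement asks (the image sits inside a compact metrizable set, which is essentially the content of Theorem \ref{thm2} and of Kallenberg's result in this special case), and your one-point-compactification variant makes the link to Theorem \ref{thm2} explicit. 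The paper instead works intrinsically on $\mathcal{M}(X)$: it shows $\mathcal{M}_{v1}(X)$ is Hausdorff (Lemma \ref{lem6}, via the Riesz representation theorem), regular (Lemma \ref{lem5}), and second countable under the countability hypothesis (Lemma \ref{lem8}, using the countable dense family $\prod_2$ of measures with rational masses on the elementary events, carried over from the setwise case in Lemma \ref{lem2}), and then applies the Urysohn metrization theorem to $\mathcal{M}(X)$ itself rather than to $X$; this keeps the argument uniform with the setwise case in Theorem \ref{thm8} and avoids functional analysis. The two load-bearing points in your version --- injectivity of the embedding (needed for it to be a homeomorphism onto its image, which you justify by regularity of Borel measures on a Polish space plus Urysohn functions) and the identification of $\mathfrak{W}_{v2}$ with the initial topology of the maps $\mu\mapsto\int_X f\,d\mu$ --- are both handled adequately, so I see no gap.
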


The conclusion is not true for non-LCH space $X$, see Remark \ref{rem1}. It is natural to ask the separability and metrizablility of $\mathcal{M}_{v1}(X)$ or $\mathcal{M}_{v2}(X)$ in case the Borel $\sigma$-algebra $\mathcal{B}$ has uncountably many elementary events. We are not able to provide a conclusive answer to this question, even if $X$ is LCH. However, we do have some results when $X$ is a compact metric space. The separability of the Borel probability space under the vague topology can be found in \cite[Exercise 1.10.22.]{Tao}, see also Proposition \ref{pro1}.

\begin{theorem}\label{thm2}
The probability space $\mathcal{M}_{v1}(X)$ ($\mathcal{M}_{v2}(X)$) is separable and metrizable if the ambient space $X$ is a compact metric space.  
\end{theorem}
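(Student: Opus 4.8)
The plan is to reduce the claim to the classical fact that $\mathcal{M}(X)$ with the \emph{weak} topology is separable and metrizable when $X$ is a compact metric space, and then to show that on a compact metric space the two vague topologies and the weak topology all coincide. The first observation is that for compact $X$ we have $C(X) = C_b(X) = C_0(X) = C_{sc}(X)$, since every continuous real-valued function on a compact space is bounded and has compact support (namely $X$ itself), and vanishing at infinity is vacuous. Hence by the very definitions of $\mathfrak{W}_{v1}$ and $\mathfrak{W}_{v2}$ their subbasic sets $W_{v1}(\nu,f,\epsilon)$ and $W_{v2}(\nu,f,\varepsilon)$ range over exactly the same family of functions $f \in C(X)$, so $\mathfrak{W}_{v1} = \mathfrak{W}_{v2}$; and this common topology is precisely the weak topology on $\mathcal{M}(X)$ (which is generated by $\int f\, d\nu$ for $f \in C_b(X)$). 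One could also simply invoke Theorem \ref{thm4}, since a compact metric space is in particular LCH, to get $\mathfrak{W}_{v1} = \mathfrak{W}_{v2}$, but the direct argument above is cleaner here and additionally identifies the topology with the weak one.

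Next I would establish metrizability. Since $X$ is a compact metric space, $C(X)$ with the sup-norm is a separable Banach space; fix a countable dense set $\{f_n\}_{n\geq 1}$ in the unit ball of $C(X)$. Define
\begin{equation}\label{eq-metric}
d(\mu,\nu) = \sum_{n=1}^{\infty} 2^{-n} \left| \int_X f_n \, d\mu - \int_X f_n \, d\nu \right|.
\end{equation}
The standard verification shows $d$ is a metric (separation uses that $\{f_n\}$ is dense in $C(X)$, hence $d(\mu,\nu)=0$ forces $\int f\,d\mu = \int f\,d\nu$ for all $f \in C(X)$, and then $\mu = \nu$ by the Riesz representation theorem on the compact metric space $X$), and that a net $\mu_\alpha \to \mu$ in $d$ if and only if $\int f_n\, d\mu_\alpha \to \int f_n\, d\mu$ for every $n$, which by density of $\{f_n\}$ and uniform boundedness of probability measures is equivalent to $\int f\, d\mu_\alpha \to \int f\, d\mu$ for all $f \in C(X)$, i.e.\ convergence in the vague (= weak) topology. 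Thus $d$ induces $\mathfrak{W}_{v1} = \mathfrak{W}_{v2}$.

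For separability, I would exhibit a countable dense subset explicitly: take a countable dense set $D \subset X$ (which exists since $X$ is a separable metric space) and let $\mathcal{D}$ be the collection of all finitely supported probability measures on $D$ with rational weights. Given any $\nu \in \mathcal{M}(X)$ and any basic vague neighborhood determined by finitely many functions $f_1, \dots, f_k \in C(X)$ and $\epsilon > 0$, one uses uniform continuity of each $f_j$ on the compact metric space $X$ together with a finite partition of $X$ into small Borel pieces, picking a point of $D$ in each piece and assigning it (an approximately rational version of) the $\nu$-mass of that piece; the resulting measure in $\mathcal{D}$ lies in the prescribed neighborhood. This is the routine "approximation by atomic measures" argument, and the main technical point to handle carefully is matching the construction of the partition to the moduli of continuity of $f_1, \dots, f_k$ simultaneously; none of it is deep. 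Alternatively, separability follows for free from metrizability plus second countability, or one may simply cite \cite[Exercise 1.10.22]{Tao} or Proposition \ref{pro1}. The only genuine obstacle, if any, is bookkeeping in the explicit density argument; the metrizability half is entirely standard once the identification $\mathfrak{W}_{v1} = \mathfrak{W}_{v2} = \text{weak topology}$ is in hand.
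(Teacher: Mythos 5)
Your proof is correct, and the two halves compare differently with the paper. For separability you do essentially what the paper does: the paper invokes Proposition \ref{pro1} (density of the finitely supported measures on a countable dense subset of $X$), which is exactly your atomic-approximation argument, so there is no real divergence there. Where you genuinely differ is metrizability. The paper stays inside point-set topology: it shows $\mathcal{M}_{v1}(X)$ is Hausdorff (Lemma \ref{lem6}, via Riesz representation), regular (Lemma \ref{lem5}), and second-countable (pairing a countable sup-norm-dense subset of $C(X)=C_{sc}(X)$ with the countable dense family of discrete measures), and then applies the Urysohn Metrization Theorem. You instead observe that on compact $X$ one has $C(X)=C_b(X)=C_0(X)=C_{sc}(X)$, so both vague topologies coincide with the weak topology, and you write down the explicit metric $d(\mu,\nu)=\sum_{n}2^{-n}\bigl|\int_X f_n\,d\mu-\int_X f_n\,d\nu\bigr|$ for a countable dense sequence $\{f_n\}$ in the unit ball of $C(X)$; the uniqueness half of the Riesz representation theorem (valid because Borel probability measures on a compact metric space are Radon) gives the separation axiom, and your net characterization of $d$-convergence correctly establishes that $d$ induces the topology, not merely that it has the same convergent sequences. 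Both routes ultimately rest on the same two inputs --- separability of $C(X)$ in the sup norm and density of discrete rational-weight measures --- so the difference is packaging: your version produces an explicit metric (which the abstract Urysohn argument does not, and which is of independent interest given the paper's later question about explicit metrics), while the paper's version is structured to run in parallel with its non-metrizable setwise analogue (Theorem \ref{thm8}), where no such functional-analytic shortcut exists.
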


\begin{rem}
The readers are strongly recommended to \cite[Theorem 4.2]{Kallen1} for a stronger result on the separability, metrizability and completeness of the space $\mathcal{M}_{v1}(X)$ ($\mathcal{M}_{v2}(X)$), with $X$ being a separable and complete metric space (a compact metric space is separable and complete). The author finds Kallenberg's exciting result after Theorem \ref{thm2}, but I will retain it in its form as the proof differs from Kallenberg's techniques completely, which will benefit the readers from different point of views. See also Problem \ref{prob2}. 
\end{rem}

Now we turn to the weak topology on $\mathcal{M}(X)$. 

\begin{definition}
The \emph{Type-I weak topology} $\mathfrak{W}_{w1}$ on $\mathcal{M}(X)$ is the topology with basis 
\begin{center}
$W_{w1}(\nu,f, \epsilon)=\{\varrho\in\mathcal{M}(X): |\int_X f(x) d\varrho-\int_X f(x) d\nu|<\epsilon \}$
\end{center}
for any $f\in C_b(X)$ and any real $\epsilon>0$. 
\end{definition}

This topology is obviously finer than the two types of vague topology $\mathfrak{W}_{v1}$ and $\mathfrak{W}_{v2}$ on $\mathcal{M}(X)$. There is a detailed study of some finer form of weak topology and its various applications in  \cite{Kal}. The topology $\mathfrak{W}_{w1}$ is metrizable if $X$ is metrizable, for example, by the Prohorov metric (see \cite[Section 6]{Bil1}). The Type-I weak topology is well-understood, so will not be our focus in this work. See Problem \ref{prob1} on comparing it with other types of weak topology on $\mathcal{M}(X)$.

In the following we introduce two types of setwise topology on $\mathcal{M}(X)$. Different from the introduction of the vague and weak topology above, we would like to do this from the setwisely sequential convergence of probability measures in $\mathcal{M}(X)$. 

\begin{definition}\label{def1}
A sequence of probability measures $\{\nu_n\in\mathcal{M}(X)\}_{n=1}^\infty$ is said to converge \emph{setwisely} to $\nu\in\mathcal{M}(X)$, if 
\begin{center}
$\lim_{n\rightarrow\infty }\nu_n(A)=\nu(A)$
\end{center}
for any $A\in\mathcal{B}$. 
\end{definition}

See for example \cite{Doo, GR, HL1, Las1}. Denote the sequential convergence in this sense by 
\begin{center}
$\nu_n\stackrel{s}{\rightarrow}\nu$ 
\end{center}
as $n\rightarrow\infty$. 

\begin{definition}\label{def3}
The induced topology $\mathfrak{W}_{s1}$ with subbasis
\begin{center}
$W_{s1}(\nu, A, \epsilon)=\{\varrho\in\mathcal{M}(X): |\varrho(A)-\nu(A)|<\epsilon \}$
\end{center}
for $A\in\mathcal{B}$ and any real $\epsilon>0$ is called the Type-I \emph{setwise topology} on $\mathcal{M}(X)$. 
\end{definition}

An equivalent way on describing the setwisely sequential convergence of measures stems from treating the measure as a  functional on the space of bounded Borel-measurable functions on $X$. 

\begin{definition}\label{def2}
A sequence of probability measures $\{\nu_n\in\mathcal{M}(X)\}_{n=1}^\infty$ is said to converge \emph{setwisely} to $\nu\in\mathcal{M}(X)$, if 
\begin{center}
$\lim_{n\rightarrow\infty }\int_X f(x) d\nu_n=\int_X f(x) d\nu$
\end{center}
for any $f\in M_b(X)$. 
\end{definition}
This is because the simple functions are dense among the bounded Borel-measurable functions on $X$ under the supremum norm. In this way one can define a topology $\mathfrak{W}_{s2}$ on $\mathcal{M}(X)$ as following.

\begin{definition}\label{def4}
The Type-II \emph{setwise topology} $\mathfrak{W}_{s2}$ on $\mathcal{M}(X)$ is the topology with basis 
\begin{center}
$W_{s2}(\nu,f, \epsilon)=\{\varrho\in\mathcal{M}(X): |\int_X f(x) d\varrho-\int_X f(x) d\nu|<\epsilon \}$
\end{center}
for any $f\in M_b(X)$ and any real $\epsilon>0$. 
\end{definition}

Feinberg, Kasyanov and Zgurovsky gave some equivalent conditions on verifying the setwisely sequential convergence of probability measures in $\mathcal{M}(X)$ with $X$ being metrizable as following (refer to \cite[Theorem 2.3]{FKZ1}).

\begin{theorem}[Feinberg-Kasyanov-Zgurovsky]\label{thm9}
For a sequence of measures $\{\nu_n\in\mathcal{M}(X)\}_{n=1}^\infty$ and $\nu\in\mathcal{M}(X)$ with a metric space $X$, the following conditions are equivalent to each other:
\begin{enumerate}[(I).]
\item $\nu_n\stackrel{s}{\rightarrow}\nu$ as $n\rightarrow\infty$.
\item $\lim_{n\rightarrow\infty} \nu_n(A)= \nu(A)$ for any open set $A\subset X$.
\item $\lim_{n\rightarrow\infty} \nu_n(A)= \nu(A)$ for any closed set $A\subset X$.
\end{enumerate}
\end{theorem}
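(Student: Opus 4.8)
The plan is to collect the two trivial implications for free, observe that (II) and (III) are equivalent by a complementation argument that needs nothing about the metric, and then concentrate all the real work on proving (III)$\Rightarrow$(I) with both (II) and (III) at our disposal. Indeed (I)$\Rightarrow$(II) and (I)$\Rightarrow$(III) are immediate, since open and closed sets lie in $\mathcal{B}$. For (II)$\Leftrightarrow$(III): if $A\subset X$ is open then $X\setminus A$ is closed, and since every $\nu_n$ and $\nu$ is a probability measure we have $\nu_n(A)=1-\nu_n(X\setminus A)$ and $\nu(A)=1-\nu(X\setminus A)$; hence $\nu_n(A)\to\nu(A)$ for all open $A$ if and only if $\nu_n(B)\to\nu(B)$ for all closed $B$. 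So it remains to prove (III)$\Rightarrow$(I), where we may also invoke (II).

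The key tool will be the regularity of the single finite Borel measure $\nu$ on the metric space $X$: for each $A\in\mathcal{B}$ and each $\epsilon>0$ there exist a closed set $F$ and an open set $U$ with $F\subset A\subset U$ and $\nu(U\setminus F)<\epsilon$. I would establish this in the usual way, by showing that the family $\mathcal{R}$ of Borel sets admitting such approximations is a $\sigma$-algebra containing every closed set. Stability of $\mathcal{R}$ under complementation is clear (swap $F$ and $U$); stability under countable unions comes from an $\epsilon/2^{n+1}$ allocation combined with continuity of $\nu$ from below — the union of the open approximants is open, while one replaces the countable union of the closed approximants by a long enough finite subunion, which is still closed. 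The only place the metric intervenes is in showing that closed sets lie in $\mathcal{R}$: for closed $C$, take $F=C$ and $U_\delta=\{x\in X: d(x,C)<\delta\}$, where $d(x,C)=\inf_{y\in C}d(x,y)$; then each $U_\delta$ is open, $\bigcap_{\delta>0}U_\delta=C$ because $C$ is closed, and finiteness of $\nu$ gives $\nu(U_\delta)\to\nu(C)$ as $\delta\downarrow0$ by continuity from above, so a small enough $\delta$ works. Since $\mathcal{B}$ is generated by the closed sets, $\mathcal{R}=\mathcal{B}$.

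Finally I would finish (III)$\Rightarrow$(I) by a sandwich. Fix $A\in\mathcal{B}$ and $\epsilon>0$ and pick closed $F$ and open $U$ with $F\subset A\subset U$ and $\nu(U\setminus F)<\epsilon$, so that $\nu(F)>\nu(A)-\epsilon$ and $\nu(U)<\nu(A)+\epsilon$. For every $n$ we have $\nu_n(F)\le\nu_n(A)\le\nu_n(U)$; letting $n\to\infty$ and applying (III) to $F$ and (II) to $U$ yields $\nu(A)-\epsilon<\liminf_{n}\nu_n(A)\le\limsup_{n}\nu_n(A)<\nu(A)+\epsilon$, and since $\epsilon$ was arbitrary this gives $\nu_n(A)\to\nu(A)$, i.e. $\nu_n\stackrel{s}{\rightarrow}\nu$. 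The hard part is really the regularity lemma: it is precisely where metrizability is essential (after fixing a compatible metric), the argument not going through on a general topological ambient space, and a little care is needed in the $\sigma$-algebra step to keep the inner approximants closed, which is why one passes to finite subunions rather than to the full countable union.
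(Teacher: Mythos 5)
Your argument is correct and complete. Note, however, that the paper does not prove this statement at all: it is quoted as Feinberg--Kasyanov--Zgurovsky's result with a pointer to \cite[Theorem 2.3]{FKZ1}, so there is no in-paper proof to compare against. What you have written is the standard argument and is essentially the one underlying the cited result: the trivial implications, the complementation equivalence of (II) and (III) for probability measures, and then (III)$\Rightarrow$(I) via the regularity of the single finite Borel measure $\nu$ on a metric space (closed inner / open outer approximation of every Borel set) followed by the sandwich $\nu_n(F)\le\nu_n(A)\le\nu_n(U)$. Your treatment of the two delicate points is right: in the $\sigma$-algebra step you correctly pass to a finite subunion of the closed approximants (justified by continuity from below) to keep the inner approximant closed, and you correctly isolate the only use of the metric in showing that a closed set $C$ is the decreasing intersection of the open sets $\{x: d(x,C)<\delta\}$, where finiteness of $\nu$ gives continuity from above. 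No gaps.
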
  

There are some sufficient conditions to guarantee setwisely sequential convergence of measures in certain contexts by Feinberg-Kasyanov-Zgurovsky and J. Lasserre, see \cite[Section 3]{FKZ1} and \cite[Lemma 4.1(i)(ii)]{Las1}. See also the Vitali-Hahn-Saks Theorem (refer to \cite{Doo} or \cite{HL2}) on setwisely sequential convergence of measures in $\mathcal{M}(X)$.  Due to the equivalence of Definition \ref{def1} and \ref{def2}, it is alluring whether the two types of topology $\mathfrak{W}_{s1}$ and $\mathfrak{W}_{s2}$ are equivalent to each other on $\mathcal{M}(X)$. 
 
\begin{theorem}\label{thm7}
The Type-I setwise topology  $\mathfrak{W}_{s1}$ is always equivalent to the Type-II setwise topology  $\mathfrak{W}_{s2}$ on the probability space $\mathcal{M}(X)$. 
\end{theorem}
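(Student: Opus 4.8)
The plan is to show that the two topologies $\mathfrak{W}_{s1}$ and $\mathfrak{W}_{s2}$ have the same open sets by showing each sub-basic/basic open set of one topology is open in the other. Since $\mathfrak{W}_{s1}$ is generated by the subbasis $\{W_{s1}(\nu,A,\epsilon)\}$ and $\mathfrak{W}_{s2}$ by the basis $\{W_{s2}(\nu,f,\epsilon)\}$ with $f\in M_b(X)$, it suffices to prove: (a) every $W_{s1}(\nu,A,\epsilon)$ is $\mathfrak{W}_{s2}$-open, and (b) every $W_{s2}(\nu,f,\epsilon)$ is $\mathfrak{W}_{s1}$-open.

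Direction (a) is immediate: for a fixed $A\in\mathcal{B}$ the indicator $\mathbf{1}_A$ lies in $M_b(X)$, and $\int_X \mathbf{1}_A\, d\varrho = \varrho(A)$, so $W_{s1}(\nu,A,\epsilon) = W_{s2}(\nu,\mathbf{1}_A,\epsilon)$, which is a basic $\mathfrak{W}_{s2}$-open set. Hence every subbasic element of $\mathfrak{W}_{s1}$ belongs to $\mathfrak{W}_{s2}$, so $\mathfrak{W}_{s1}\subseteq\mathfrak{W}_{s2}$.

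Direction (b) is the main point. Fix $f\in M_b(X)$, $\nu\in\mathcal{M}(X)$, $\epsilon>0$, and a point $\varrho_0\in W_{s2}(\nu,f,\epsilon)$; I must produce a $\mathfrak{W}_{s1}$-neighborhood of $\varrho_0$ contained in $W_{s2}(\nu,f,\epsilon)$. The key tool is that, since $f$ is bounded and Borel measurable, simple functions are dense in $M_b(X)$ under the supremum norm: choose a simple function $g=\sum_{i=1}^{k} c_i\mathbf{1}_{A_i}$ with $A_i\in\mathcal{B}$ pairwise disjoint and $\|f-g\|_\infty<\delta$, where $\delta$ will be fixed below. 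Then for \emph{any} probability measure $\mu$ one has $|\int_X f\, d\mu - \int_X g\, d\mu|\le \delta$ because $\mu$ is a probability measure (total mass $1$). Writing $\eta = \epsilon - |\int_X f\,d\varrho_0 - \int_X f\,d\nu| > 0$, I pick $\delta<\eta/4$ and then set $\theta>0$ small enough that $\sum_{i=1}^k |c_i|\,\theta < \eta/4$. The candidate neighborhood is $U = \bigcap_{i=1}^{k} W_{s1}(\varrho_0, A_i, \theta)$, a finite intersection of subbasic $\mathfrak{W}_{s1}$-sets hence $\mathfrak{W}_{s1}$-open and containing $\varrho_0$. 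For $\varrho\in U$ one estimates, using the triangle inequality through the chain $\int f\,d\varrho \rightsquigarrow \int g\,d\varrho \rightsquigarrow \int g\,d\varrho_0 \rightsquigarrow \int f\,d\varrho_0 \rightsquigarrow \int f\,d\nu$: the first and third steps are bounded by $\delta$ (density of $g$ and probability mass $1$), the second by $\sum_i |c_i|\,|\varrho(A_i)-\varrho_0(A_i)| < \sum_i |c_i|\,\theta$, and the last by $|\int f\,d\varrho_0 - \int f\,d\nu| = \epsilon - \eta$; summing gives $|\int_X f\,d\varrho - \int_X f\,d\nu| < (\epsilon-\eta) + 2\delta + \eta/4 < \epsilon$, so $\varrho\in W_{s2}(\nu,f,\epsilon)$. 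Thus $W_{s2}(\nu,f,\epsilon)\in\mathfrak{W}_{s1}$, giving $\mathfrak{W}_{s2}\subseteq\mathfrak{W}_{s1}$ and hence equality.

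I expect the only real subtlety to be bookkeeping of the $\epsilon$-$\delta$-$\theta$ dependencies and making sure the approximating simple function is chosen \emph{before} the neighborhood (so that the finitely many sets $A_i$ are fixed); the fact that all measures involved are probability measures (mass exactly $1$) is what lets the $\|f-g\|_\infty$ error be absorbed uniformly, and is the one place where restricting to $\mathcal{M}(X)$ rather than $\hat{\mathcal{M}}(X)$ matters — for finite measures of uniformly bounded mass the same argument goes through after an obvious rescaling of $\delta$. No assumption on the ambient topological space $X$ is used anywhere, which is why the equivalence holds in full generality.
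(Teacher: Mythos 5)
Your proof is correct and follows essentially the same route as the paper's: the easy inclusion via indicator functions, and the converse via a uniform simple-function approximation $g=\sum_i c_i\mathbf{1}_{A_i}$ of $f$ together with a finite intersection of $W_{s1}$-neighborhoods controlling the masses $\varrho(A_i)$ (the paper just builds $g$ explicitly from the level sets $f^{-1}$ of a partition of $[-\Vert f\Vert_\infty,\Vert f\Vert_\infty]$ rather than citing density of simple functions). A small point in your favor: you verify openness of $W_{s2}(\nu,f,\epsilon)$ at an arbitrary point $\varrho_0$ of it, whereas the paper only exhibits a $\mathfrak{W}_{s1}$-neighborhood of the center $\nu$, leaving the (routine) recentering step implicit.
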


Note that two topologies on a same space admit the same convergent sequences does not guarantee they are equivalent to each other, since there are examples of spaces with inequivalent topology which admit the same convergent sequences (as one can refer to an example by J. Schur \cite{Sch}). Especially one needs to be careful about the case when the two types of setwise topology are not metrizable (refer to Theorem \ref{thm8}).

In the following we pay attention to the separability and metrizability of the probability space $\mathcal{M}(X)$ under the setwise topology. Rename the space $\mathcal{M}(X)$ as  $\mathcal{M}_{s1}(X)$ equipped with the topology $\mathfrak{W}_{s1}$, and $\mathcal{M}_{s2}(X)$ equipped with the topology $\mathfrak{W}_{s2}$. Part of the following result is due to J. K. Ghosh and R. V. Ramamoorthi \cite[Proposition 2.2.1]{GR}.
\begin{theorem}\label{thm8}
The topological space $\mathcal{M}_{s1}(X)$ ($\mathcal{M}_{s2}(X)$) is separable or metrizable if and only if the Borel $\sigma$-algebra $\mathcal{B}$ admits at most countably many elementary events.  
\end{theorem}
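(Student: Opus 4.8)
By Theorem~\ref{thm7}, $\mathfrak{W}_{s1}$ and $\mathfrak{W}_{s2}$ coincide on $\mathcal{M}(X)$, so it suffices to treat $\mathcal{M}_{s1}(X)$. The one elementary fact used throughout is that if $a\in\mathcal{B}$ is an elementary event then $a\cap B\in\{\emptyset,a\}$ for every $B\in\mathcal{B}$ (since $a\cap B\in\mathcal{B}$ is a subset of $a$); hence $\delta_a(B):=\mathbf{1}_{\{a\subseteq B\}}$ is a Borel probability measure, and two distinct elementary events are disjoint (their intersection being a $\mathcal{B}$-subset of each).

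\emph{Necessity}, in the contrapositive: suppose $\mathcal{B}$ has uncountably many elementary events $\{a_j\}_{j\in J}$. The basic open sets $W_{s1}(\delta_{a_j},a_j,\tfrac12)=\{\varrho\in\mathcal{M}(X):\varrho(a_j)>\tfrac12\}$ are non-empty and, because $\varrho(a_j)+\varrho(a_{j'})\le1$ for $j\ne j'$, pairwise disjoint; an uncountable pairwise disjoint family of non-empty open sets cannot be met by a countable set, so $\mathcal{M}_{s1}(X)$ is not separable. For the failure of metrizability one shows $\mathcal{M}_{s1}(X)$ is not first countable: given a countable $\mathcal{C}\subseteq\mathcal{B}$, the countably generated algebra $\sigma(\mathcal{C})$ is too coarse to pin down a suitable $\mu_0$, and (using a properly divisible, i.e.\ atomless, part of $\mathcal{B}$ carrying $\mu_0$, by a splitting/independence argument) one produces $\nu\in\mathcal{M}(X)$ and $B\in\mathcal{B}$ with $\nu=\mu_0$ on all of $\mathcal{C}$ yet $|\nu(B)-\mu_0(B)|\ge\tfrac12$, so that $\{\varrho:|\varrho(B)-\mu_0(B)|<\tfrac12\}$ is a neighbourhood of $\mu_0$ containing no finite intersection of subbasic sets drawn from $\mathcal{C}$; hence $\mathcal{C}$ is not a neighbourhood base.

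\emph{Sufficiency.} Let $\mathcal{B}$ have at most countably many elementary events $a_1,a_2,\dots$ and set $A:=\bigcup_i a_i\in\mathcal{B}$. The crucial step is to prove $\mu(A)=1$ for every $\mu\in\mathcal{M}(X)$ — equivalently, that the remainder $X\setminus A$, whose $\sigma$-algebra has no minimal non-empty element, carries no mass; this is where I would use that $\mathcal{B}$ is the Borel $\sigma$-algebra of a topology, so that there is no room for a hidden atomless part. Granting this, $\mu(B)=\sum_{i:\,a_i\subseteq B}\mu(a_i)$ for all $B\in\mathcal{B}$, so $\Phi\colon\mu\mapsto(\mu(a_i))_i$ injects $\mathcal{M}(X)$ into the Hilbert cube $[0,1]^{\mathbb N}$. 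It is continuous by the definition of $\mathfrak{W}_{s1}$, and $\Phi^{-1}$ is continuous because $[0,1]^{\mathbb N}$ is metrizable and each $\mu\mapsto\mu(B)$ is sequentially continuous there: if $\mu_n(a_i)\to\mu(a_i)$ for all $i$ then, all total masses being $1$, Scheffé's lemma gives $\sum_i|\mu_n(a_i)-\mu(a_i)|\to0$, whence $\mu_n(B)\to\mu(B)$ uniformly in $B$. Thus $\mathcal{M}_{s1}(X)$ embeds homeomorphically into the Hilbert cube, so it is separable and metrizable; this recovers and extends \cite[Proposition~2.2.1]{GR}. The single genuinely delicate ingredient — and the step I expect to be the main obstacle — is exactly this structural claim that $\mu(A)=1$ for all $\mu\in\mathcal{M}(X)$; everything else is soft topology (disjoint open families, the Hilbert-cube embedding, Scheffé's lemma).
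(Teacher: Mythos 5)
Your separability arguments coincide with the paper's (the same pairwise disjoint family $\{\varrho:\varrho(a_j)>\tfrac12\}$ for necessity), and your sufficiency route is genuinely different and in principle stronger: instead of verifying Hausdorff, regular and second countable and invoking Urysohn metrization as the paper does, you embed $\mathcal{M}_{s1}(X)$ into $[0,1]^{\mathbb N}$ and use Scheff\'e's lemma, which would show that the setwise topology actually coincides with the TV topology in the countable case. However, both halves of your sketch contain gaps. The one you flag yourself --- that $\mu\bigl(\bigcup_i a_i\bigr)=1$ for every $\mu\in\mathcal{M}(X)$, equivalently that every Borel set is $\mu$-essentially a countable union of elementary events --- is indeed the crux, and you should know that the paper does not close it either: the proof of Lemma~\ref{lem2} simply treats an arbitrary $A\in\mathcal{B}$ as if it were of the form $B_F$ or $\bigcup_{i}A_{n_i}$, i.e.\ it tacitly assumes $\mathcal{B}$ is atomic. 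Your hope that ``there is no room for a hidden atomless part'' because $\mathcal{B}$ is a Borel $\sigma$-algebra is not argued anywhere, and it is not obvious: the elementary events are exactly the Borel topological-indistinguishability classes, and a topology can have points whose class is not Borel. Until this structural claim is proved (or a counterexample ruled out), the sufficiency direction is incomplete in both your argument and the paper's.

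The second gap is in your necessity-of-countability argument for metrizability, and it is more serious because the needed object need not exist. Your ``splitting/independence argument'' (like the paper's appeal to a ``continuous measure'' and \cite[Proposition~2.2.1(ii)]{GR}) presupposes an atomless $\mu_0\in\mathcal{M}(X)$, but uncountably many elementary events do not supply one. Take $X$ to be a discrete space of cardinality $\aleph_1$: the elementary events are the uncountably many singletons, yet by Ulam's theorem there is no probability measure on $\mathcal{P}(\omega_1)$ vanishing on singletons, so every $\mu\in\mathcal{M}(X)$ is purely atomic with countable support. For such measures one checks directly (your own Scheff\'e computation, run at a fixed $\mu_0=\sum_i c_i\delta_{x_i}$: the finite intersection $\bigcap_{i\le N}W_{s1}(\mu_0,\{x_i\},\tfrac{\epsilon}{4N})$ with $\sum_{i>N}c_i<\tfrac{\epsilon}{4}$ is contained in the TV-ball of radius $\epsilon$) that the setwise topology equals the TV topology, hence is metrizable although not separable. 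So the implication ``metrizable $\Rightarrow$ countably many elementary events'' cannot be established by the route you sketch, and as stated it appears to fail; at best one can salvage ``separable $\Rightarrow$ countably many elementary events'' and the non-first-countability at an atomless measure \emph{when such a measure exists}. You should either add a hypothesis guaranteeing an atomless measure or restrict the ``only if'' claim to separability.
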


Now we consider setwisely relative compactness of families of probability measures in  $\mathcal{M}(X)$. A family of probability measures $\Xi\subset \mathcal{M}(X)$ is said to be \emph{setwisely relatively compact} if there is a setwisely convergent subsequence in every sequence of probability measures in $\Xi$. We give a necessary and sufficient condition for families of probability measures to be setwisely relatively compact when the ambient space $X$ is a compact metric space.  

\begin{theorem}\label{thm1}
For a compact metric space $X$, a family of probability measures $\Xi\subset \mathcal{M}(X)$ is setwisely relatively compact if and only if for any sequence of probability measures $\{\nu_n\}_{n=1}^\infty\subset \Xi$, there is a subsequence $\{\nu_{n_i}\}_{i=1}^\infty$, such that
\begin{equation}\label{eq3}
\limsup_{i\rightarrow\infty} \nu_{n_i}(U)=\sup_{K\subset U, K \mbox{ is closed}}\limsup_{i\rightarrow\infty} \nu_{n_i}(K)
\end{equation}
for any open set $U\subset X$.
\end{theorem}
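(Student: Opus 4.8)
The plan is to prove the two implications separately, with Theorem~\ref{thm9} serving as the bridge between convergence on open sets and setwise convergence, and with weak compactness of $\mathcal{M}(X)$ supplying the candidate limits.

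\emph{Necessity} is the routine direction. Given $\{\nu_n\}\subset\Xi$, setwise relative compactness produces a subsequence $\{\nu_{n_i}\}$ with $\nu_{n_i}\stackrel{s}{\rightarrow}\nu$ for some $\nu\in\mathcal{M}(X)$. Then $\lim_{i\rightarrow\infty}\nu_{n_i}(A)=\nu(A)$ for every $A\in\mathcal{B}$, so all three $\limsup$'s in \eqref{eq3} are honest limits and \eqref{eq3} collapses to the identity $\nu(U)=\sup\{\nu(K):K\subset U,\ K\text{ closed}\}$, which is just inner regularity by closed sets of $\nu$ --- a property enjoyed by every finite Borel measure on a metric space.

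\emph{Sufficiency.} Fix an arbitrary sequence $\{\nu_n\}\subset\Xi$; the goal is to extract a setwisely convergent subsequence. First I would invoke that, for compact metric $X$, the space $\mathcal{M}(X)$ is sequentially compact in the weak topology: by the Riesz representation theorem $\mathcal{M}(X)$ sits inside the unit ball of $C(X)^{\ast}$ as a weak$^{\ast}$-closed subset, this ball is weak$^{\ast}$-compact by Banach--Alaoglu and metrizable because $C(X)$ is separable. This yields a subsequence $\{\nu_{m_k}\}$ converging weakly to some $\mu\in\mathcal{M}(X)$. The order of the next step is essential: only now do I apply the hypothesis to the sequence $\{\nu_{m_k}\}\subset\Xi$ to obtain a further subsequence $\{\nu_{m_{k_l}}\}$ satisfying \eqref{eq3}; being a subsequence of $\{\nu_{m_k}\}$, it still converges weakly to $\mu$.

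It remains to promote weak convergence of $\{\nu_{m_{k_l}}\}$ to setwise convergence. For an open set $U$ the portmanteau inequalities (see \cite{Bil1}) give $\liminf_{l}\nu_{m_{k_l}}(U)\ge\mu(U)$ and $\limsup_{l}\nu_{m_{k_l}}(K)\le\mu(K)$ for every closed $K\subset U$; substituting the latter into \eqref{eq3} and using inner regularity by closed sets of $\mu$,
\begin{equation*}
\limsup_{l\rightarrow\infty}\nu_{m_{k_l}}(U)=\sup_{K\subset U,\ K\text{ closed}}\limsup_{l\rightarrow\infty}\nu_{m_{k_l}}(K)\le\sup_{K\subset U,\ K\text{ closed}}\mu(K)=\mu(U),
\end{equation*}
so $\lim_{l\rightarrow\infty}\nu_{m_{k_l}}(U)=\mu(U)$ for every open $U$. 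Theorem~\ref{thm9} (implication (II)$\Rightarrow$(I)) then gives $\nu_{m_{k_l}}\stackrel{s}{\rightarrow}\mu$, which is the desired subsequence. The only genuinely delicate point --- and what I expect to be the main obstacle to get right --- is organizational: one must pass to a weakly convergent subsequence \emph{before} invoking the hypothesis, since $\limsup$ is not preserved under further thinning, so applying \eqref{eq3} first would render it useless after the weak-compactness extraction. Everything else is bookkeeping with the portmanteau theorem, regularity of finite Borel measures on metric spaces, and Theorem~\ref{thm9}.
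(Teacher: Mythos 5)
Your proof is correct, and the sufficiency direction takes a genuinely different route from the paper's. The necessity argument is identical (setwise limit plus inner regularity of the limit measure). For sufficiency, the paper does not use weak$^{*}$ compactness: it follows Billingsley's proof of Prohorov's theorem, fixing a countable base $\mathcal{B}_c$ with the closure-refinement property of Lemma \ref{lem4}, forming the countable family $\mathcal{\bar{B}}_c$ of finite unions of closures, diagonalizing so that $\lim_j \nu_{n_{i_j}}(B)$ exists for all $B\in\mathcal{\bar{B}}_c$, and then defining the candidate limit by $\nu(U)=\sup_{B\subset U,\,B\in\mathcal{\bar{B}}_c}\lim_j\nu_{n_{i_j}}(B)$, which plays the role your portmanteau inequalities play; the identity (\ref{eq6}) matching sups over $\mathcal{\bar{B}}_c$ with sups over closed sets replaces your appeal to inner regularity of $\mu$. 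You instead obtain the limit $\mu$ at once from Banach--Alaoglu and the Riesz representation theorem and close the gap with the standard portmanteau inequalities, which is shorter and avoids Lemma \ref{lem4} entirely, at the cost of importing functional-analytic machinery; the paper's construction is more self-contained and exhibits the limit measure explicitly. One point where your version is actually tighter: you correctly insist on extracting the weakly convergent subsequence \emph{before} invoking the hypothesis, precisely because (\ref{eq3}) is a statement about $\limsup$'s and is not automatically inherited by further subsequences. The paper performs the extractions in the opposite order --- it first takes the subsequence satisfying (\ref{eq3}), then diagonalizes over $\mathcal{\bar{B}}_c$, and asserts in (\ref{eq5}) that the resulting subsequence still satisfies (\ref{eq3}) --- a step that is left unjustified and is exactly the pitfall you flag. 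Your ordering sidesteps this cleanly, since weak convergence, unlike a $\limsup$ identity, passes to all further subsequences.
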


In case the ambient space $X$ is not a compact metric space, it seems more complicated to judge setwisely relative compactness of a family of probability measures in  $\mathcal{M}(X)$. See Problem \ref{prob3}.

The last topology we consider in this work is the total-variation (TV) topology  on  $\mathcal{M}(X)$. It is induced by the total-variation (TV) metric on $\mathcal{M}(X)$.

\begin{definition}
The \emph{total-variation metric}  is defined by
\begin{center}
$\Vert \nu-\varrho\Vert_{TV}=\sup_{A\in\mathcal{B}}\{|\nu(A)-\varrho(A)|\}$
\end{center}   
between two measures $\nu, \varrho\in\mathcal{M}(X)$.
\end{definition}

One is recommended to \cite{Doo, FKZ1, HL1, PS}) for more related topics on the TV topology on $\mathcal{M}(X)$. The Type-I (Type-II) setwise topology is coarser than the TV topology, and this comparison is strict in some cases. In fact, there are examples of sequences of Borel probability measures converging under the setwise topology but diverging under the $TV$ topology on compact metric ambient spaces.

\section{Comparison of the vague topology $\mathfrak{W}_{v1}$ with $\mathfrak{W}_{v2}$ and the setwise topology $\mathfrak{W}_{s1}$ with $\mathfrak{W}_{s2}$ on $\mathcal{M}(X)$}

This section is devoted to the proof of Theorem \ref{thm4} and   Theorem \ref{thm7} on comparison of different types of vague and setwise topology on $\mathcal{M}(X)$. To prove Theorem \ref{thm4}, we need some suitable continuous approximation for characteristic functions of compact sets in LCH spaces. The following result is a locally compact version of Urysohn's Lemma (refer to \cite[p131]{Fol}).   

\begin{Urysohn's Lemma}
Let $X$ be an LCH space. For any compact set $K\subset X$, let $U\supset K$ be its open neighbourhood, then there exists a continuous map $g_K: X\rightarrow [0,1]$ with compact support $Supp(g_K)\subset U$, such that
\begin{center}
$g_K(x)=1$
\end{center} 
for any $x\in K$.
\end{Urysohn's Lemma}

Now we recall the following separation axioms in general topological spaces.
\begin{itemize}

\item A topological space $X$ is called \emph{Hausdorff} or $T_2$ if for any two points $x,y\in X$, there are two disjoint open sets containing $x$ and  $y$ respectively.

\item A topological space $X$ is called \emph{regular} if for any point $x\in X$ and any closed set $C\subset X$ not containing $x$, there are two disjoint open sets containing $x$ and  $C$ respectively.

\item A topological space $X$ is called $T_3$ if it is both Hausdorff and regular.

\end{itemize}

Note that some people use the terminology 'regular' for 'regular and Hausdorff' ($T_3$) spaces.  A topological space $X$ is called \emph{second-countable} if its topology admits a countable basis. The equivalence of the  Type-I and Type-II vague topology on $\mathcal{M}(X)$ with an LCH space $X$ follows essentially from the Urysohn's Lemma.

\bigskip

Proof of Theorem \ref{thm4}:\\

\begin{proof}
It is enough for us to show that the Type-I vague topology is finer than the Type-II vague topology
in case of $X$ being an LCH space. For a measure $\nu\in\mathcal{M}(X)$, consider its neighbourhood $W_{v2}(\nu,f, \epsilon)$ for some continuous function $f: X\rightarrow\mathbb{R}$ vanishing at infinity and some $\epsilon>0$ under the Type-II vague topology. Consider the following two sets in $X$,
\begin{center}
$K=f^{-1}\big((-\infty,-\cfrac{\epsilon}{4}]\cup[\cfrac{\epsilon}{4},\infty)\big)$
\end{center}  
and
\begin{center}
$U=f^{-1}\big((-\infty,-\cfrac{\epsilon}{8})\cup(\cfrac{\epsilon}{8},\infty)\big)$.
\end{center} 
Obviously $K\subset U$ while $K$ is compact and $U$ is open. So according to the Urysohn's Lemma, there exists a continuous map $g_K: X\rightarrow [0,1]$ with compact support $Supp(g_K)\subset U$, such that
\begin{center}
$g_K(x)=1$
\end{center} 
for any $x\in K$. Let $g=f\cdot g_K$. It is a continuous map on $X$ with compact support. We claim that 
the neighbourhood 
\begin{equation}\label{eq10}
W_{v1}(\nu,g, \cfrac{\epsilon}{8})\subset W_{v2}(\nu,f, \epsilon).
\end{equation}
To see this, for any probability measure $\varrho\in W_{v1}(\nu,g, \cfrac{\epsilon}{8})$, we have
\begin{center}
$
\begin{array}{ll}
& |\int_X f(x) d\varrho-\int_X f(x) d\nu|\\ 
= & |\int_X \big(f(x)-g(x)\big) d\varrho+\int_X g(x) d\varrho-\int_X g(x) d\nu-\int_X \big(f(x)-g(x)\big) d\nu|\\
\leq & |\int_X \big(f(x)-g(x)\big) d\varrho|+|\int_X g(x) d\varrho-\int_X g(x) d\nu|+|\int_X \big(f(x)-g(x)\big) d\nu|\\
= & |\int_{K'} \big(1-g_K(x)\big)f(x) d\varrho|+|\int_X g(x) d\varrho-\int_X g(x) d\nu|+|\int_{K'} \big(1-g_K(x)\big)f(x) d\nu|\\
< & \cfrac{3\epsilon}{8},
\end{array}
$
\end{center}
in which $K'=X\setminus K$ is the residual set. (\ref{eq10}) guarantees the Type-I vague topology is finer than Type-II vague topology with LCH ambient spaces. 
\end{proof}

It would be an interesting question to ask whether there exists some none-LCH space $X$, such that the  Type-II vague topology $\mathfrak{W}_{v2}$ is strictly finer than the Type-I vague topology $\mathfrak{W}_{v1}$ on $\mathcal{M}(X)$. We can not give an example of a none-LCH space $X$ for which the strict fineness holds between the two types of vague topology, although there do exist examples of none-LCH spaces $X$ on which the Urysohn's Lemma fails.

Since Urysohn's Lemma holds on normal spaces, the following result follows from the proof of Theorem \ref{thm4}.
\begin{corollary}
The  Type-II vague topology $\mathfrak{W}_{v2}$ is equivalent to the Type-I vague topology $\mathfrak{W}_{v1}$ on $\mathcal{M}(X)$ with $X$ being a normal space.
\end{corollary}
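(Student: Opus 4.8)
The plan is to run the argument of Theorem~\ref{thm4} essentially unchanged, substituting the classical Urysohn Lemma for normal spaces in place of its locally compact version, and then verifying carefully that the bump function it produces still lies in $C_{sc}(X)$. Since $C_{sc}(X)\subset C_0(X)$ always holds, $\mathfrak{W}_{v2}$ is automatically finer than $\mathfrak{W}_{v1}$, so it suffices to show that for every $\nu\in\mathcal{M}(X)$, every $f\in C_0(X)$ and every $\epsilon>0$, the Type-II neighbourhood $W_{v2}(\nu,f,\epsilon)$ contains a Type-I basic neighbourhood of $\nu$.

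First I would set $K=f^{-1}\big((-\infty,-\tfrac{\epsilon}{4}]\cup[\tfrac{\epsilon}{4},\infty)\big)$ and $U=f^{-1}\big((-\infty,-\tfrac{\epsilon}{8})\cup(\tfrac{\epsilon}{8},\infty)\big)$, exactly as in the proof of Theorem~\ref{thm4}. Here $K$ is closed, being the preimage of a closed set, and compact because $f$ vanishes at infinity; $U$ is open with $K\subset U$, so $K$ and $X\setminus U$ are disjoint closed subsets of the normal space $X$. The classical Urysohn Lemma then yields a continuous $g_K\colon X\to[0,1]$ with $g_K\equiv 1$ on $K$ and $g_K\equiv 0$ on $X\setminus U$. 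Put $g=f\cdot g_K$, a continuous real function on $X$.

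The one point that needs care --- and the only place where this corollary is not a verbatim transcription of Theorem~\ref{thm4} --- is checking $g\in C_{sc}(X)$, since on a general normal space the Urysohn function $g_K$ itself need not have compact support. What saves the argument is that $g$ vanishes wherever $g_K$ does, so $\{g\neq 0\}\subset U$ and hence $Supp(g)\subset\overline{U}\subset f^{-1}\big((-\infty,-\tfrac{\epsilon}{8}]\cup[\tfrac{\epsilon}{8},\infty)\big)$; this last set is compact because $f$ vanishes at infinity, and $Supp(g)$, being a closed subset of it, is compact. With $g\in C_{sc}(X)$ in hand, the remaining estimate is literally the one in the proof of Theorem~\ref{thm4}: on $K'=X\setminus K$ one has $|f|<\tfrac{\epsilon}{4}$ and $0\le 1-g_K\le 1$, while $f-g=f(1-g_K)$ vanishes on $K$, so for any $\varrho\in W_{v1}(\nu,g,\tfrac{\epsilon}{8})$,
\begin{align*}
\Big|\int_X f\,d\varrho-\int_X f\,d\nu\Big|
&\le\Big|\int_{K'}(1-g_K)f\,d\varrho\Big|+\Big|\int_X g\,d\varrho-\int_X g\,d\nu\Big|+\Big|\int_{K'}(1-g_K)f\,d\nu\Big|\\
&<\epsilon,
\end{align*}
using $\varrho(K'),\nu(K')\le 1$. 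Hence $W_{v1}(\nu,g,\tfrac{\epsilon}{8})\subset W_{v2}(\nu,f,\epsilon)$, so $\mathfrak{W}_{v1}$ is finer than $\mathfrak{W}_{v2}$ and the two vague topologies coincide. I do not expect any genuine obstacle beyond the support-of-$g$ verification; everything else is already contained in the proof of Theorem~\ref{thm4}.
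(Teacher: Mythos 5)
Your proposal is correct and follows essentially the same route as the paper, which simply observes that the proof of Theorem~\ref{thm4} goes through verbatim once the locally compact Urysohn Lemma is replaced by the classical one for normal spaces. Your explicit check that $Supp(g)\subset\overline{U}\subset f^{-1}\big((-\infty,-\tfrac{\epsilon}{8}]\cup[\tfrac{\epsilon}{8},\infty)\big)$ is compact (because $f$ vanishes at infinity) is exactly the one detail the paper leaves implicit, and you have verified it correctly.
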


Now we show equivalence of the Type-I and Type-II setwise topology on $\mathcal{M}(X)$.
\bigskip

Proof of Theorem \ref{thm7}:\\

\begin{proof}
Obviously  the topology  $\mathfrak{W}_{s2}$ is finer than  $\mathfrak{W}_{s1}$ since any characteristic function $1_A$ of an measurable set $A\in\mathcal{B}$ is a bounded measurable function. In the following we show the inverse is also true. 

For a probability measure $\nu\in\mathcal{M}(X)$, a function $f\in M_b(X)$ and a small $\epsilon>0$, consider the neighbourhood $W_{s2}(\nu,f, \epsilon)$ of $\nu$. We will find an open neighbourhood $U_{\nu}$ of $\nu$ under $\mathfrak{W}_{s1}$, such that $U_{\nu}\subset W_{s2}(\nu,f, \epsilon)$, this is enough to justify the Type-I setwise topology $\mathfrak{W}_{s1}$ is finer than Type-II setwise topology $\mathfrak{W}_{s2}$.   

First, choose an integer $N\in\mathbb{N}$ large enough such that 
\begin{center}
$\cfrac{4\Vert f\Vert_{\infty}}{N}<\epsilon$.
\end{center}
For $1\leq i\leq N-1$, let 
\begin{center}
$A_i=f^{-1}\big([-\Vert f\Vert_{\infty}+\frac{2(i-1)\Vert f\Vert_{\infty}}{N},-\Vert f\Vert_{\infty}+\frac{2i\Vert f\Vert_{\infty}}{N})\big)$.
\end{center}
Let 
\begin{center}
$A_N=f^{-1}\big([\Vert f\Vert_{\infty}-\frac{2\Vert f\Vert_{\infty}}{N},\Vert f\Vert_{\infty}]\big)$. 
\end{center}
Note that $A_i\in\mathcal{B}$ for any $1\leq i\leq N$ since $f$ is measurable, and $\cup_{1\leq i\leq N}A_i$ is a disjoint partition of the ambient space $X$. Now for every $1\leq i\leq N$, consider the open neighbourhood  $W_{s1}(\nu,A_i, \frac{\epsilon}{2N\Vert f\Vert_{\infty}})$ of $\nu$. Let
\begin{center}
$U_{\nu}=\cap_{1\leq i\leq N}W_{s1}(\nu,A_i, \frac{\epsilon}{2N\Vert f\Vert_{\infty}})$
\end{center} 
be the open neighbourhood of $\nu$ under $\mathfrak{W}_{s1}$. We claim that 
\begin{equation}
U_{\nu}\subset W_{s2}(\nu,f, \epsilon).
\end{equation}
To see this, for any probability measure $\varrho\in U_{\nu}$, compare the integration of $f$ with respect to $\nu$ and $\varrho$ over $X$, we have
\begin{center}
$
\begin{array}{ll}
& |\int_X fd\nu-\int_X fd\varrho|\vspace{2mm}\\
\leq & \sum_{1\leq i\leq N}|\int_{A_i} fd\nu-\int_{A_i} fd\varrho| \vspace{2mm}\\
\leq & \max\Big\{\sum_{1\leq i\leq N}\Big|\big((-\Vert f\Vert_{\infty}+\frac{2i\Vert f\Vert_{\infty}}{N})\nu(A_i)-(-\Vert f\Vert_{\infty}+\frac{2(i-1)\Vert f\Vert_{\infty}}{N})\varrho(A_i)\big)\Big|, \\
&\sum_{1\leq i\leq N}\Big|\big((-\Vert f\Vert_{\infty}+\frac{2(i-1)\Vert f\Vert_{\infty}}{N})\nu(A_i)-(-\Vert f\Vert_{\infty}+\frac{2i\Vert f\Vert_{\infty}}{N})\varrho(A_i)\big)\Big|\Big\} \vspace{2mm}\\
\leq & \sum_{1\leq i\leq N}\Big|\Big(\big(-\Vert f\Vert_{\infty}+\frac{2(i-1)\Vert f\Vert_{\infty}}{N}\big)\big(\nu(A_i)-\varrho(A_i)\big)\Big)\Big|+\frac{2\Vert f\Vert_{\infty}}{N}\vspace{2mm}\\
< & \sum_{1\leq i\leq N}\Vert f\Vert_{\infty}\frac{\epsilon}{2N\Vert f\Vert_{\infty}}+\frac{2\Vert f\Vert_{\infty}}{N}\vspace{2mm}\\
\leq & \frac{\epsilon}{2}+\frac{\epsilon}{2}\vspace{2mm}\\
= & \epsilon.
\end{array}
$
\end{center} 

\end{proof}

\begin{rem}
Be careful that the type-I setwise topology is defined by a basis in Definition \ref{def3} while the type-II setwise topology is defined by a subbasis in Definition \ref{def4} on $\mathcal{M}(X)$. 
\end{rem}

Due to Theorem \ref{thm7} we will usually not distinguish the two types of setwise topology in some cases. However, technically, it is more convenient to resort to one of the two types of setwise topology than the other in due course. These also apply to the two types of vague topology on $\mathcal{M}(X)$ with LCH ambient spaces in virtue of Theorem \ref{thm4}.

\section{Separability and metrizability of the probability space $\mathcal{M}(X)$ under the vague topology and setwise topology}

This section is devoted to properties of the probability space $\mathcal{M}(X)$ under the vague topology and setwise topology, especially the separability and metrizability. We aim to prove Theorem  \ref{thm3}, \ref{thm2} and Theorem  \ref{thm8} in this section. Our strategy is to prove Theorem  \ref{thm8} first by establishing some separation and countability properties of the probability space $\mathcal{M}_{s1}(X)$ ($\mathcal{M}_{s2}(X)$). Since the setwise topology is finer than the vague topology, some separation and countability properties of the probability space $\mathcal{M}_{s1}(X)$ ($\mathcal{M}_{s2}(X)$) are inherited naturally by the space $\mathcal{M}_{v1}(X)$ ($\mathcal{M}_{v2}(X)$) in some cases. These properties are then applied to the proof of  Theorem  \ref{thm3} and \ref{thm2}.

To prove  Theorem  \ref{thm8}, we need several preceding results on the separation and countability of the  topological space $\mathcal{M}_{s1}(X)$ ($\mathcal{M}_{s2}(X)$). In virtue of Theorem \ref{thm7}, all the separation and countability properties are shared by the two spaces $\mathcal{M}_{s1}(X)$ and $\mathcal{M}_{s2}(X)$.

\begin{lemma}\label{lem9}
The topological space $\mathcal{M}_{s1}(X)$ ($\mathcal{M}_{s2}(X)$) is Hausdorff.
\end{lemma}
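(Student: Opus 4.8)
The plan is to show directly that any two distinct probability measures $\nu,\varrho\in\mathcal{M}(X)$ can be separated by disjoint basic open sets in the Type-I setwise topology $\mathfrak{W}_{s1}$; by Theorem~\ref{thm7} this simultaneously handles $\mathcal{M}_{s2}(X)$. Since $\nu\neq\varrho$ as measures, there exists a Borel set $A\in\mathcal{B}$ with $\nu(A)\neq\varrho(A)$; set $\delta=|\nu(A)-\varrho(A)|>0$. The natural candidates for separating neighbourhoods are the subbasic sets $W_{s1}(\nu,A,\delta/2)$ and $W_{s1}(\varrho,A,\delta/2)$.

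First I would record that these two sets are genuinely open in $\mathfrak{W}_{s1}$: they are subbasic elements of the topology of Definition~\ref{def3}, hence open. Next I would verify disjointness: if some $\sigma\in\mathcal{M}(X)$ lay in both, then $|\sigma(A)-\nu(A)|<\delta/2$ and $|\sigma(A)-\varrho(A)|<\delta/2$, so by the triangle inequality $|\nu(A)-\varrho(A)|<\delta$, contradicting $|\nu(A)-\varrho(A)|=\delta$. Finally, trivially $\nu\in W_{s1}(\nu,A,\delta/2)$ and $\varrho\in W_{s1}(\varrho,A,\delta/2)$ since $|\nu(A)-\nu(A)|=0<\delta/2$ and likewise for $\varrho$. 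This exhibits disjoint open neighbourhoods of $\nu$ and $\varrho$, establishing that $\mathcal{M}_{s1}(X)$ is Hausdorff, and then by Theorem~\ref{thm7} the same holds for $\mathcal{M}_{s2}(X)$.

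There is essentially no obstacle here: the only point that deserves a sentence of care is that distinct measures must disagree on \emph{some} Borel set — which is immediate from the definition of equality of measures — and that the separation for $\mathcal{M}_{s2}(X)$ need not be re-derived from scratch but follows from the topological equivalence already proven. If one prefers to argue intrinsically for $\mathfrak{W}_{s2}$, one could instead use the function $f=1_A\in M_b(X)$ and the sets $W_{s2}(\nu,1_A,\delta/2)$, $W_{s2}(\varrho,1_A,\delta/2)$, noting $\int_X 1_A\,d\mu=\mu(A)$; the same triangle-inequality computation gives disjointness. Either route is short, so the lemma is really just the observation that the (sub)basis already separates points.
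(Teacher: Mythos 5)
Your proof is correct and is essentially the paper's own argument: both pick a Borel set $A$ on which the two measures disagree and separate them by the subbasic sets $W_{s1}(\cdot,A,\cdot)$ with radius a fixed fraction of $|\nu(A)-\varrho(A)|$ (you use $\delta/2$, the paper uses $\delta/4$; either works by the same triangle inequality). The remark that the $\mathcal{M}_{s2}(X)$ case follows either from Theorem~\ref{thm7} or directly via $f=1_A$ is also consistent with how the paper treats the two types interchangeably.
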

\begin{proof}
Without loss of generality, suppose that $X$ is not endowed with the trivial topology $\{\emptyset, X\}$. Now for two probability measures $\nu, \varrho\in\mathcal{M}_{s1}(X)$, if  $\nu\neq\varrho$, there must exist some $A\in\mathcal{B}$, such that $\nu(A)\neq\varrho(A)$. Without loss of generality suppose
\begin{center}
$\nu(A)>\varrho(A)$.
\end{center} 
Then we have 
\begin{center}
$\nu\in W_{s1}(\nu,A, \cfrac{\nu(A)-\varrho(A)}{4})$ and $\varrho\in W_{s1}(\varrho,A, \cfrac{\nu(A)-\varrho(A)}{4})$
\end{center}
while
\begin{center}
$W_{s1}(\nu,A, \cfrac{\nu(A)-\varrho(A)}{4})\cap W_{s1}(\varrho,A, \cfrac{\nu(A)-\varrho(A)}{4})=\emptyset$.
\end{center}
\end{proof}

\begin{lemma}\label{lem10}
The topological space $\mathcal{M}_{s2}(X)$ ($\mathcal{M}_{s1}(X)$) is regular.
\end{lemma}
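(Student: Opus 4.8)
The plan is to show that $\mathcal{M}_{s2}(X)$ (equivalently $\mathcal{M}_{s1}(X)$, by Theorem \ref{thm7}) is regular by exhibiting, for each point $\nu$ and each basic open neighbourhood, a smaller basic open neighbourhood whose closure is still contained in the original one. Because $\mathfrak{W}_{s2}$ has a basis of sets of the form $W_{s2}(\nu,f,\epsilon)$, and because any finite intersection of such sets again contains a set of this form (replacing finitely many $f_1,\dots,f_k$ by a single argument is not literally possible, but one reduces to a finite intersection $\bigcap_j W_{s2}(\nu,f_j,\epsilon_j)$), it suffices to handle the subbasic neighbourhoods and then take finite intersections: regularity is preserved under finite intersections of the relevant neighbourhoods. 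So I would fix $\nu$, a function $f\in M_b(X)$, and $\epsilon>0$, and look at $W_{s2}(\nu,f,\epsilon)$.

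First I would observe the key metric-like fact: the functional $L_f(\varrho):=\int_X f\,d\varrho$ is continuous on $\mathcal{M}_{s2}(X)$ essentially by definition of the topology, so $W_{s2}(\nu,f,\epsilon)=L_f^{-1}\big((L_f(\nu)-\epsilon, L_f(\nu)+\epsilon)\big)$ is the preimage of an open interval. Then the smaller neighbourhood $W_{s2}(\nu,f,\epsilon/2)=L_f^{-1}\big((L_f(\nu)-\epsilon/2,L_f(\nu)+\epsilon/2)\big)$ has the property that its closure (in the subspace $\mathcal{M}_{s2}(X)$) is contained in $L_f^{-1}\big([L_f(\nu)-\epsilon/2,L_f(\nu)+\epsilon/2]\big)$, which sits inside $W_{s2}(\nu,f,\epsilon)$. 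The point is that the closure of $L_f^{-1}(\text{open interval})$ lands inside $L_f^{-1}(\text{closed interval})$ because $L_f$ is continuous: if $\varrho_0$ is in the closure of $W_{s2}(\nu,f,\epsilon/2)$, then every neighbourhood of $\varrho_0$ meets $W_{s2}(\nu,f,\epsilon/2)$, in particular the neighbourhoods $W_{s2}(\varrho_0,f,\delta)$ for all $\delta>0$, which forces $|L_f(\varrho_0)-L_f(\nu)|\le\epsilon/2<\epsilon$. This argument is clean and avoids any appeal to metrizability (which fails in general, per Theorem \ref{thm8}).

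For the general basic neighbourhood $\bigcap_{j=1}^k W_{s2}(\nu,f_j,\epsilon_j)$, I would simply intersect the shrunken pieces: $\bigcap_{j=1}^k W_{s2}(\nu,f_j,\epsilon_j/2)$ is an open neighbourhood of $\nu$, its closure is contained in $\bigcap_{j=1}^k \overline{W_{s2}(\nu,f_j,\epsilon_j/2)}\subset\bigcap_{j=1}^k W_{s2}(\nu,f_j,\epsilon_j)$, using that the closure of an intersection is contained in the intersection of the closures. Combined with Lemma \ref{lem9} (Hausdorffness) this gives that $\mathcal{M}_{s2}(X)$ is $T_3$, hence in particular regular. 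I should remember to phrase the definition of regular exactly as in the excerpt — separating a point from a closed set not containing it — and note that it is equivalent, in a space where points are closed (which holds here since the space is Hausdorff by Lemma \ref{lem9}), to the "shrinking neighbourhood" formulation I am using; alternatively I can give the point/closed-set version directly: given $\nu\notin C$ with $C$ closed, pick a basic neighbourhood of $\nu$ missing $C$, shrink it by half as above, and its closure still misses $C$, so the open set $W$ (the half-shrunk neighbourhood) and the open set $\mathcal{M}_{s2}(X)\setminus\overline{W}$ separate $\nu$ and $C$.

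The main obstacle is really just bookkeeping: making sure the "shrink by half" trick is carried out for a finite intersection of subbasic sets rather than a single one, and being careful that closures are taken in the subspace $\mathcal{M}_{s2}(X)$ (this is automatic since we never leave that space). There is no genuine analytic difficulty — the only substantive input is the (near-tautological) continuity of the evaluation functionals $L_f$ on $\mathcal{M}_{s2}(X)$, together with the elementary topological fact that for continuous $L_f$ one has $\overline{L_f^{-1}(V)}\subset L_f^{-1}(\overline{V})$ for any set $V\subset\mathbb{R}$. I would also note explicitly that, by Theorem \ref{thm7}, proving regularity of $\mathcal{M}_{s2}(X)$ simultaneously establishes it for $\mathcal{M}_{s1}(X)$, which is why the statement brackets the two spaces together.
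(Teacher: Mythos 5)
Your proof is correct and follows the same idea as the paper's: shrink the radius of the neighbourhood by half and observe that the closure of $W_{s2}(\nu,f,\epsilon/2)$ is trapped inside $W_{s2}(\nu,f,\epsilon)$, then separate $\nu$ from the closed set $C$ by this shrunken neighbourhood and the complement of its closure. The one substantive difference is that you treat the sets $W_{s2}(\nu,f,\epsilon)$ as a subbasis and carry the half-shrinking argument through a finite intersection $\bigcap_{j=1}^{k}W_{s2}(\nu,f_j,\epsilon_j)$, whereas the paper's proof produces a \emph{single} $W_{s2}(\nu,f,\epsilon)$ inside the open complement of $C$; that step is only literally valid if the individual sets $W_{s2}(\nu,f,\epsilon)$ form a neighbourhood basis at $\nu$, which is doubtful for $\mathfrak{W}_{s2}$ (closeness of $\int(f_1+f_2)\,d\varrho$ to $\int(f_1+f_2)\,d\nu$ does not control the two integrals separately). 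Your version, using $\overline{\bigcap_j W_{s2}(\nu,f_j,\epsilon_j/2)}\subset\bigcap_j\overline{W_{s2}(\nu,f_j,\epsilon_j/2)}\subset\bigcap_j W_{s2}(\nu,f_j,\epsilon_j)$ together with the continuity of the evaluation functionals $L_f$, closes this gap and also supplies the justification for $\overline{W_{s2}(\nu,f,\epsilon/2)}\subset W_{s2}(\nu,f,\epsilon)$ that the paper asserts without proof; the appeal to Theorem \ref{thm7} to transfer the conclusion to $\mathcal{M}_{s1}(X)$ is likewise fine.
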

\begin{proof}
Let $\nu\in\mathcal{M}_{s2}(X)$ and $\Xi_1\subset \mathcal{M}_{s2}(X)$ be a closed set such that $\nu\notin \Xi_1$. So the residual set $\Xi_1'=\mathcal{M}_{s2}(X)\setminus \Xi_1$ is an open set such that $\nu\in \Xi_1'$. Then there must exist $f\in M_b(X)$ and $\epsilon>0$, such that
\begin{center}
$W_{s2}(\nu,f,\epsilon)\subset \Xi_1'$. 
\end{center}
Since  
\begin{center}
$W_{s2}(\nu,f,\cfrac{\epsilon}{4})\subset W_{s2}(\nu,f,\cfrac{\epsilon}{2})$ and the closure $\overline{W_{s2}(\nu,f,\epsilon/2)}\subset W_{s2}(\nu,f,\epsilon)$,
\end{center}
we have
\begin{center}
$\nu\in W_{s2}(\nu,f,\cfrac{\epsilon}{4})$ while $C\subset \overline{W_{s2}(\nu,f,\epsilon/2)}'$
\end{center}
and
\begin{center}
$ W_{s2}(\nu,f,\cfrac{\epsilon}{4})\cap \overline{W_{s2}(\nu,f,\epsilon/2)}'=\emptyset$.
\end{center}

\end{proof}

\begin{lemma}\label{lem2}
The topological space $\mathcal{M}_{s1}(X)$ ($\mathcal{M}_{s2}(X)$) is second-countable if the $\sigma$-algebra $\mathcal{B}$ has at most countably many elementary events.
\end{lemma}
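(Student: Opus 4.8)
The plan is to produce a countable subbasis for the setwise topology $\mathfrak{W}_{s1}$ on $\mathcal{M}(X)$; by Theorem \ref{thm7} the assertion for $\mathcal{M}_{s2}(X)$ is then immediate. Enumerate the elementary events as $\{A_i\}_{i\in I}$ with $I$ countable. First I would record that they are pairwise disjoint: if $A_i\cap A_j\neq\emptyset$ then $A_i\cap A_j$ is a non-empty Borel subset of $A_i$, so minimality of $A_i$ forces $A_i\cap A_j=A_i$, and symmetrically $A_i\cap A_j=A_j$, hence $A_i=A_j$. For the same reason $A\cap A_i\in\{\emptyset,A_i\}$ for every $A\in\mathcal{B}$ and every $i$.

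The structural point on which everything rests is that the elementary events \emph{exhaust} $X$, i.e.\ $X=\bigcup_{i\in I}A_i$; equivalently, every $A\in\mathcal{B}$ is the disjoint union $A=\bigcup\{A_i:A_i\subset A\}$. Granting this, each $\nu\in\mathcal{M}(X)$ is determined by the sequence $(\nu(A_i))_{i\in I}\in[0,1]^{I}$, with $\sum_{i\in I}\nu(A_i)=\nu(X)=1$, and each evaluation map $\nu\mapsto\nu(A_i)$ on $\mathcal{M}(X)$ is continuous for $\mathfrak{W}_{s1}$, since preimages of open intervals are subbasic.

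Then I would take the countable family
\[
\mathcal{U}=\Big\{\,\{\varrho\in\mathcal{M}(X):p<\varrho(A_i)<q\}\ :\ i\in I,\ p,q\in\mathbb{Q}\cap[0,1]\,\Big\}
\]
of $\mathfrak{W}_{s1}$-open sets and show it is a subbasis for $\mathfrak{W}_{s1}$, which gives second-countability at once. It suffices to check that each subbasic $W_{s1}(\nu,A,\epsilon)$ lies in the topology $\tau(\mathcal{U})$ generated by $\mathcal{U}$. Fixing $\varrho_0\in W_{s1}(\nu,A,\epsilon)$, I would choose a finite $F\subset I$ carrying all but a tiny fraction of the $\varrho_0$- and $\nu$-mass, and intersect the sets $\{\varrho:|\varrho(A_i)-\varrho_0(A_i)|<\eta\}$ over $i\in F$ for a small, rational-adjustable $\eta$; writing $A=\bigcup\{A_i:A_i\subset A\}$ and using $\sum_i\varrho(A_i)=1$, the discrepancy $|\varrho(A)-\varrho_0(A)|$ splits into a finite sum over $F$ plus two tail terms measuring the $\varrho$- and $\varrho_0$-mass outside $F$, all of which are small. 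This exhibits a $\tau(\mathcal{U})$-neighbourhood of $\varrho_0$ inside $W_{s1}(\nu,A,\epsilon)$, so $\mathfrak{W}_{s1}=\tau(\mathcal{U})$ is second-countable, and Theorem \ref{thm7} transfers the conclusion to $\mathcal{M}_{s2}(X)$.

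I expect the genuinely delicate step to be the exhaustion claim $X=\bigcup_{i}A_i$. If the Borel set $A_0:=X\setminus\bigcup_iA_i$ were non-empty, the trace $\sigma$-algebra $\mathcal{B}\cap 2^{A_0}$ would be non-trivial (otherwise $A_0$ is itself an elementary event) and atomless, and on such a piece the evaluations $\nu\mapsto\nu(A_i)$ no longer separate measures, so $\mathcal{U}$ would fail to be a subbasis. I would try to rule this out from the fact that $\mathcal{B}$ is the \emph{Borel} $\sigma$-algebra of a topological space --- using that topologically indistinguishable points cannot be separated by Borel sets, one may pass to the Kolmogorov $T_0$-quotient and analyse how points of $A_0$ can fail to be separated from the at most countably many atoms. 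If an atomless Borel residual genuinely cannot be excluded in complete generality, the honest fallback is to run the argument above under the hypothesis that the elementary events partition $X$, which holds automatically whenever $X$ is $T_1$ or second-countable.
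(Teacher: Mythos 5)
Your construction is essentially the paper's own: the paper likewise builds a countable dense set $\prod_1$ of measures carried by finite unions of elementary events with rational masses, and proposes the countable family $\{W_{s1}(\nu,A_i,\frac{1}{j})\}$ as a subbase, to be verified by the same finite-mass truncation argument you sketch for your family $\mathcal{U}$. The exhaustion issue you isolate --- that every Borel set must be the (countable, disjoint) union of the elementary events it contains --- is indeed the load-bearing step, and the paper assumes it tacitly rather than proving it: its density argument only treats sets of the form $A=B_F$ or $A=\bigcup_{i=1}^{\infty}A_{n_i}$, which presupposes exactly the partition property you flag. So you have not opened a new gap; you have made explicit one the paper leaves unexamined, and your fallback hypothesis (the elementary events partition $X$, automatic e.g.\ when $X$ is $T_1$ or second-countable) is precisely what both versions of the argument require to go through.
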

\begin{proof}
In case of $\#\mathcal{B}$ being finite, let 
\begin{center}
$\{A_1, A_2,\cdots, A_n\}\subset \mathcal{B}$
\end{center}
be the collection of all the elementary events. Then the set of probability measures
\begin{center}
$\{\nu: \nu(A_i)\in\mathbb{Q}, 0\leq \nu(A_i)\leq 1 \mbox{\ for any\ } 1\leq i\leq n \mbox{\ and\ }\sum_{1\leq i\leq n} \nu(A_i)=1\}$
\end{center}
is a dense subset of $\mathcal{M}_{s1}(X)$. Every measure in the set has a countable neighbourhood basis, which can be used to build a countable basis of $\mathcal{M}_{s1}(X)$. 

Now suppose $\mathcal{B}$ has countably many elementary events $\{A_i\}_{i=1}^\infty$. Consider the collection of finite unions of these elementary events,
\begin{center}
$\{B_F=\cup_{i\in F}A_i\}_{F\subset\mathbb{N},\#F<\infty}$.
\end{center}
It is a countable set. We claim that the countable set of measures
\begin{center}
$\prod_1=\cup_{F\subset\mathbb{N},\#F<\infty,j\in(\mathbb{N}\setminus F)}\{\nu: \nu(B_F)\in\mathbb{Q}\cap[0,1], \nu(A_j)=1-\nu(B_F)\}$
\end{center}
is a countable dense subset in $\mathcal{M}_{s1}(X)$ (one has the freedom to adjust mass on the elementary events in $B_F$, but we take only one such measure with respect to individual $B_F$ in $\prod_1$). 

To see this, let $\varrho\in\mathcal{M}_{s1}(X)$ be a probability measure. For any measurable set $A\in\mathcal{B}$ and any $\epsilon>0$, consider the neighbourhood $W_{s1}(\varrho,A,\epsilon)$. Without loss of generality suppose $A\neq X$ in the following. Now if $A=B_F$ for some  $F\subset\mathbb{N}$ and $\#F<\infty$, obviously there exists some $\nu\in\prod_1$, such that $\nu\in W_{s1}(\varrho,A,\epsilon)$. If 
\begin{center}
$A=\cup_{i=1}^\infty A_{n_i}$
\end{center}
for some $\{n_i\}_{i=1}^\infty\subset\mathbb{N}$, then there exists $k\in\mathbb{N}$ large enough, such that
\begin{center}
$0<\varrho(A)-\varrho(\cup_{i=1}^k A_{n_i})<\cfrac{\epsilon}{4}$.
\end{center}
Let $F^*=\{n_i\}_{i=1}^k$, so $B_{F^*}=\cup_{i=1}^k A_{n_i}$. Then there exists $\nu_{F^*}\in\prod_1$, such that
\begin{center}
$|\nu_{F^*}(B_{F^*})-\varrho(B_{F^*})|<\cfrac{\epsilon}{4}$ 
\end{center}
and 
\begin{center}
$\nu_{F^*}(A\setminus B_{F^*})=0$.
\end{center}
So
\begin{center}
$
\begin{array}{ll}
|\nu_{F^*}(A)-\varrho(A)| & =|\nu_{F^*}(B_{F^*})-\varrho(B_{F^*})+\nu_{F^*}(A\setminus B_{F^*})-\varrho(A\setminus B_{F^*})|\\
&\leq |\nu_{F^*}(B_{F^*})-\varrho(B_{F^*})|+|\nu_{F^*}(A\setminus B_{F^*})-\varrho(A\setminus B_{F^*})|\\
&=|\nu_{F^*}(B_{F^*})-\varrho(B_{F^*})|+|\varrho(A)-\varrho(\cup_{i=1}^k A_{n_i})|\\
&<\epsilon/2.
\end{array}
$
\end{center} 
This implies $\nu_{F^*}\in W_{s1}(\varrho,A,\epsilon)$, and thus justifies our claim.

To see the topological space $\mathcal{M}_{s1}(X)$ is second countable, let 
\begin{center}
$\mathcal{B}_{s}=\{W_{s1}(\nu,A_i,\frac{1}{j}): \nu\in\prod_1, 1\leq i,j<\infty\}$
\end{center}
be the countable family of open sets. The proof that it can serve as a subbase of the topological space $\mathcal{M}_{s1}(X)$ is left to the keen readers.
\end{proof}

Now we are in a position to prove Theorem \ref{thm8}.\\

Proof of Theorem \ref{thm8}:

\begin{proof}
If the Borel $\sigma$-algebra $\mathcal{B}$ admits at most countably many elementary events, then there is a  dense subset of $\mathcal{M}_{s1}(X)$ according to the proof of Lemma \ref{lem2}. If the $\sigma$-algebra $\mathcal{B}$ has uncountably many elementary events, consider the collection of all open neighbourhoods of the Dirac measures $\{\delta_A: A \mbox{\ is an elementary event in\ }\mathcal{B} \}$. One can see that
\begin{center}
$\{W_{s1}(\delta_A,A,1/2): A \mbox{\ is an elementary event in\ }\mathcal{B} \}$
\end{center}  
is an uncountably disjoint family of open sets. So $\mathcal{M}_{s1}(X)$ is not separable in this case.

As to the metrization, if $\mathcal{B}$ has at most countably many elementary events, considering Lemma  \ref{lem9}, \ref{lem10} and \ref{lem2}, $\mathcal{M}_{s1}(X)$ is metrizable by the \emph{Urysohn Metrization Theorem} \cite[Theorem 34.1]{Mun}. If the $\sigma$-algebra $\mathcal{B}$ has uncountably many elementary events, it supports a continuous measure, which does not admit a countable neighbourhood basis according to \cite[Proposition 2.2.1(ii)]{GR}, so  $\mathcal{M}_{s1}(X)$ is not metrizable in this case.
\end{proof}

Now we turn to the proof of Theorem \ref{thm3}. Again we first establish some separation properties of the spaces $\mathcal{M}_{v1}(X)$ and $\mathcal{M}_{v2}(X)$.

\begin{lemma}\label{lem6}
The topological space $\mathcal{M}_{v1}(X)$ ($\mathcal{M}_{v2}(X)$) is Hausdorff if $X$ is LCH.
\end{lemma}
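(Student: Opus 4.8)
The plan is to show that $\mathcal{M}_{v1}(X)$ is Hausdorff by separating any two distinct probability measures $\nu \neq \varrho$ using basic neighbourhoods built from compactly supported continuous test functions. The key point is that on an LCH space, the measures in $\mathcal{M}(X)$ are determined (at least partly) by their action on $C_{sc}(X)$, so that $\nu \neq \varrho$ forces the existence of some $f \in C_{sc}(X)$ with $\int_X f\,d\nu \neq \int_X f\,d\varrho$. Once such an $f$ is in hand, I would set $\delta = \frac{1}{4}\big|\int_X f\,d\nu - \int_X f\,d\varrho\big| > 0$ and observe that $\nu \in W_{v1}(\nu, f, \delta)$, $\varrho \in W_{v1}(\varrho, f, \delta)$, and these two basic open sets are disjoint by the triangle inequality (if $\varrho'$ belonged to both, then $|\int f\,d\nu - \int f\,d\varrho| < 2\delta = \frac12|\int f\,d\nu - \int f\,d\varrho|$, a contradiction). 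This is the same elementary separation argument used in Lemma \ref{lem9}.

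The main obstacle is therefore the claim that $\nu = \varrho$ whenever $\int_X f\,d\nu = \int_X f\,d\varrho$ for all $f \in C_{sc}(X)$; equivalently, that $C_{sc}(X)$ is "measure-separating" on $\mathcal{M}(X)$ for an LCH space $X$. I would prove this via the Riesz representation theorem: on an LCH space, the Riesz–Markov theorem identifies positive linear functionals on $C_{sc}(X)$ (or $C_0(X)$) with Radon measures, and a Borel probability measure on a "nice" LCH space is Radon (one should be a bit careful here — in full generality one may need $X$ to be, say, second-countable or $\sigma$-compact so that finite Borel measures are automatically Radon, but the paper's standing hypotheses and the companion results such as Theorem \ref{thm3} already restrict to that setting). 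Alternatively, and perhaps more cleanly, I would argue directly: if $\nu \neq \varrho$ there is a Borel set $A$ with $\nu(A) \neq \varrho(A)$; using inner/outer regularity pick a compact $K$ and open $U$ with $K \subset A \subset U$ and both $\nu(U\setminus K)$ and $\varrho(U\setminus K)$ small, then apply the locally compact Urysohn's Lemma (quoted earlier in the excerpt) to get $g \in C_{sc}(X)$ with $1_K \leq g \leq 1_U$, whence $\int g\,d\nu$ and $\int g\,d\varrho$ are close to $\nu(A)$ and $\varrho(A)$ respectively and hence distinct for a sufficiently good choice of $K$, $U$.

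So the steps, in order, would be: (1) reduce Hausdorffness to the measure-separating property of $C_{sc}(X)$ via the standard two-neighbourhood argument; (2) given $\nu \neq \varrho$, produce a Borel set $A$ with $\nu(A) \neq \varrho(A)$ and, using regularity of finite Borel measures on the LCH space together with Urysohn's Lemma, upgrade this to a function $g \in C_{sc}(X)$ with $\int_X g\,d\nu \neq \int_X g\,d\varrho$; (3) conclude by exhibiting the disjoint basic neighbourhoods $W_{v1}(\nu, g, \delta)$ and $W_{v1}(\varrho, g, \delta)$ with $\delta = \frac14|\int g\,d\nu - \int g\,d\varrho|$. The corresponding statement for $\mathcal{M}_{v2}(X)$ is then immediate since $\mathfrak{W}_{v2}$ is finer than $\mathfrak{W}_{v1}$ (or, via Theorem \ref{thm4}, is actually equal to it on LCH spaces). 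I expect step (2) — pinning down exactly which regularity of Borel measures is available under the paper's hypotheses — to be where the real care is needed; everything else is routine.
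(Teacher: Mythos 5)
Your proposal follows essentially the same route as the paper: the paper likewise invokes the Riesz representation theorem to produce $f\in C_{sc}(X)$ with $\int_X f\,d\nu\neq\int_X f\,d\varrho$ and then separates $\nu$ and $\varrho$ by the two basic neighbourhoods of radius one quarter of the gap. The caveat you raise in step (2) is genuine and is not addressed in the paper's proof either: on a general LCH (even compact Hausdorff) space distinct Borel probability measures need not induce distinct functionals on $C_{sc}(X)$ --- the Dieudonn\'e measure on $[0,\omega_1]$ integrates every continuous function to its value at $\omega_1$ yet differs from $\delta_{\omega_1}$ as a Borel measure --- so some extra hypothesis (e.g.\ every open subset of $X$ is $\sigma$-compact, or restricting to Radon measures) is needed for the separating step, and hence for the lemma as stated, to go through.
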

\begin{proof}
For two probability measures $\nu, \varrho\in\mathcal{M}_{v1}(X)$, if  $\nu\neq\varrho$, according to the Riesz Representation Theorem (see for example \cite[Theorem 2.22]{Kallen3} or \cite[Theorem 2.14]{Rud}), there exists some $f\in C_{sc}(X)$, such that 
\begin{center}
$\int_X f d\nu\neq \int_X f d\varrho$.
\end{center}
Without loss of generality suppose
\begin{center}
$\int_X f d\nu< \int_X f d\varrho$.
\end{center} 
Now let $\epsilon=\cfrac{\int_X f d\varrho-\int_X f d\nu}{4}$. One can easily check that 
\begin{center}
$W_{v1}(\nu,f, \epsilon)$ and $W_{v1}(\varrho,f, \epsilon)$
\end{center}
are two disjoint open neighbourhoods of $\nu$ and $\varrho$ respectively.
\end{proof}

\begin{rem}\label{rem1}
Lemma \ref{lem6} needs not to be true without the assumption of $X$ being LCH. For example, consider the affine space $\mathbb{A}^n$ endowed with the Zariski topology admitting infinitely many closed sets. Since any continuous map on $\mathbb{A}^n$ is a constant map in this case, then
\begin{center}
$\int_{\mathbb{A}^n} f d\varrho-\int_{\mathbb{A}^n} f d\nu=0$
\end{center} 
for any $f\in C(\mathbb{A}^n)$ and any two probability measures $\nu, \varrho\in\mathcal{M}(\mathbb{A}^n)$. In this case a neighbourhood $W_{v1}(\nu,f, \epsilon)$ of $\nu$ is always the whole space  $\mathcal{M}(\mathbb{A}^n)$ for any $f\in C(\mathbb{A}^n)$ and $\epsilon>0$.
\end{rem}

\begin{lemma}\label{lem5}
The topological space $\mathcal{M}_{v1}(X)$ ($\mathcal{M}_{v2}(X)$) is regular.
\end{lemma}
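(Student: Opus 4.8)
The plan is to mimic the standard argument that in a topological vector-space-like setting, a point and a closed set not containing it can be separated by a "shrunken" basic neighbourhood and the complement of the closure of a slightly larger one — exactly the trick already used in the proof of Lemma \ref{lem10} for the setwise topology. The only new ingredient needed is that the basic vague neighbourhoods $W_{v1}(\nu,f,\epsilon)$ (resp. $W_{v2}(\nu,f,\epsilon)$) behave well under shrinking the radius, and in particular that $\overline{W_{v1}(\nu,f,\epsilon/2)}\subset W_{v1}(\nu,f,\epsilon)$. This containment is the heart of the matter and is where I would spend the effort.

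First I would let $\nu\in\mathcal{M}_{v1}(X)$ and let $\Xi\subset\mathcal{M}_{v1}(X)$ be closed with $\nu\notin\Xi$. Then $\mathcal{M}_{v1}(X)\setminus\Xi$ is open and contains $\nu$, so by definition of the topology generated by the basis there exist finitely many functions $f_1,\dots,f_k\in C_{sc}(X)$ and $\epsilon>0$ with $\nu\in\bigcap_{j=1}^k W_{v1}(\nu,f_j,\epsilon)\subset\mathcal{M}_{v1}(X)\setminus\Xi$. (For $\mathfrak{W}_{v2}$ replace $C_{sc}(X)$ by $C_0(X)$; the argument is verbatim the same.) Set $V=\bigcap_{j=1}^k W_{v1}(\nu,f_j,\epsilon/4)$, an open neighbourhood of $\nu$. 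The claim will be that the closure of $V$ is contained in $\bigcap_{j=1}^k W_{v1}(\nu,f_j,\epsilon)$, hence disjoint from $\Xi$; then $V$ and $\mathcal{M}_{v1}(X)\setminus\overline{V}$ are the required disjoint open sets separating $\nu$ and $\Xi$.

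The key step is therefore to show $\overline{W_{v1}(\nu,f,\delta)}\subset W_{v1}(\nu,f,2\delta)$ for any single $f\in C_{sc}(X)$ and $\delta>0$ (applied with $\delta=\epsilon/4$, and intersecting over $j$, using that the closure of an intersection lies in the intersection of the closures). To see this, suppose $\varrho\notin W_{v1}(\nu,f,2\delta)$, i.e.\ $|\int_X f\,d\varrho-\int_X f\,d\nu|\geq 2\delta$. Then the set $\{\sigma\in\mathcal{M}_{v1}(X):|\int_X f\,d\sigma-\int_X f\,d\varrho|<\delta\}=W_{v1}(\varrho,f,\delta)$ is an open neighbourhood of $\varrho$, and by the triangle inequality every $\sigma$ in it satisfies $|\int_X f\,d\sigma-\int_X f\,d\nu|>\delta$, so $W_{v1}(\varrho,f,\delta)\cap W_{v1}(\nu,f,\delta)=\emptyset$; hence $\varrho\notin\overline{W_{v1}(\nu,f,\delta)}$. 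This gives the containment, and the lemma follows. I would remark that no hypothesis on $X$ is needed here — regularity of $\mathcal{M}_{v1}(X)$ and $\mathcal{M}_{v2}(X)$ holds unconditionally, just as in Lemma \ref{lem10} — and that the same proof shows these spaces are in fact completely regular, since the function $\sigma\mapsto\bigl|\int_X f\,d\sigma-\int_X f\,d\nu\bigr|$ is continuous.

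The main obstacle, such as it is, is purely bookkeeping: a basic open set in the topology generated by a basis of $W_{v1}$'s is an intersection of finitely many of them centred at (possibly) different measures, so one has to be slightly careful that the finitely many $f_j$ can be taken centred at $\nu$ itself — this is fine because $\nu$ lies in the chosen basic set and one can pass to a smaller basic neighbourhood of $\nu$ of the form $\bigcap_j W_{v1}(\nu,f_j,\epsilon)$, which is exactly the structure of a basis element centred at $\nu$. Once that is observed, the shrink-the-radius computation above does all the work.
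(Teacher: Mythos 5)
Your proof is correct and follows essentially the same route as the paper, which simply invokes the shrink-the-radius argument of Lemma \ref{lem10} with $M_b(X)$ replaced by $C_{sc}(X)$ (or $C_0(X)$). In fact you supply the two details the paper leaves implicit — reducing to a finite intersection of subbasic sets recentred at $\nu$, and verifying $\overline{W_{v1}(\nu,f,\delta)}\subset W_{v1}(\nu,f,2\delta)$ via the triangle inequality — so your version is the more complete one.
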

\begin{proof}
The proof of Lemma \ref{lem10} applies in these cases, with the role of the measurable function $f\in M_b(X)$ substituted by a continuous function $f\in C_{sc}(X)$ (or $f\in C_0(X)$).  
\end{proof}

Note that Lemma \ref{lem5} holds for any topological space $X$ instead of only for LCH spaces, comparing with Lemma \ref{lem6}.

\begin{lemma}\label{lem8}
The probability spaces $\mathcal{M}_{v1}(X)$ and $\mathcal{M}_{v2}(X)$ are second-countable if the $\sigma$-algebra $\mathcal{B}$ admits at most countably many elementary events.
\end{lemma}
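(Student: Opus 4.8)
The plan is to mirror the structure of Lemma \ref{lem2}, exploiting that the vague topologies are coarser than the setwise topology, so the explicit countable dense set $\prod_1$ constructed there still lives inside $\mathcal{M}_{v1}(X)$ and $\mathcal{M}_{v2}(X)$. In view of Theorem \ref{thm4} it suffices to treat $\mathcal{M}_{v1}(X)$, i.e.\ the Type-I vague topology with testing functions in $C_{sc}(X)$. First I would dispose of the finite case: if $\mathcal{B}$ has only the elementary events $\{A_1,\dots,A_n\}$, then every probability measure is determined by the vector $(\nu(A_1),\dots,\nu(A_n))$ in the simplex, and the rational points of the simplex give a countable dense set whose basic vague neighbourhoods form a countable basis.

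For the countably-infinite case, let $\{A_i\}_{i=1}^\infty$ enumerate the elementary events and keep the countable family $\{B_F=\cup_{i\in F}A_i\}_{F\subset\mathbb{N},\,\#F<\infty}$ together with the countable set of measures $\prod_1$ from the proof of Lemma \ref{lem2}. The key point is that density of $\prod_1$ in the \emph{vague} topology is easier than in the setwise topology, not harder: any basic vague neighbourhood $W_{v1}(\varrho,f,\epsilon)$ with $f\in C_{sc}(X)$ can be controlled by finitely many set-values. Concretely, $f$ takes values in a bounded interval, so by slicing its range into finitely many pieces one writes $f$ up to a uniform error $<\epsilon/4$ as a simple function $\sum_{k} c_k \mathbf 1_{E_k}$ with each $E_k\in\mathcal{B}$; since $X$ is the disjoint union of the $A_i$, each $E_k$ is a countable union of elementary events, and one approximates $\varrho$ simultaneously on a large finite subcollection of them by a measure of $\prod_1$, exactly as in Lemma \ref{lem2}. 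This makes $|\int f\,d\nu-\int f\,d\varrho|<\epsilon$, so $\prod_1$ is vaguely dense. Then, just as in Lemma \ref{lem2}, I would take
\begin{center}
$\mathcal{B}_{v}=\{W_{v1}(\nu,f,\tfrac1j): \nu\in\textstyle\prod_1,\ f\in\mathcal{F}_0,\ 1\leq j<\infty\}$,
\end{center}
where $\mathcal{F}_0$ is a fixed countable family of functions in $C_{sc}(X)$ adapted to the $B_F$'s (for instance, finite rational combinations of a countable generating set of compactly supported functions), and verify it is a countable subbasis.

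The main obstacle is the last verification, that such a countable family of vague neighbourhoods really is a subbasis for $\mathfrak{W}_{v1}$. Unlike the setwise case, where the subbasic sets $W_{s1}(\nu,A,\epsilon)$ are indexed by the algebraically transparent objects $A\in\mathcal{B}$, here one must exhibit a \emph{countable} set of test functions $\mathcal{F}_0\subset C_{sc}(X)$ that is ``rich enough'': for every $\varrho$, every $f\in C_{sc}(X)$ and every $\epsilon$, some member of $\prod_1$ with a function from $\mathcal{F}_0$ sits inside $W_{v1}(\varrho,f,\epsilon)$. The cleanest route is the approximation argument of the previous paragraph, which shows the topology on $\mathcal{M}_{v1}(X)$ is already generated by the maps $\varrho\mapsto\varrho(B_F)$ (a countable collection of real-valued functions), so second-countability reduces to second-countability of a countable product of copies of $[0,1]$. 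I would present that reduction carefully and leave the final routine bookkeeping — that finite intersections of the $\mathcal{B}_v$-sets around points of $\prod_1$ generate the topology — to the reader, as in Lemma \ref{lem2}.
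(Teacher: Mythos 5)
Your density argument is fine: $\prod_1$ is setwise dense by Lemma \ref{lem2}, and the vague topologies are coarser than $\mathfrak{W}_{s1}$, so it stays dense; that gives separability. The genuine gap is in your final reduction. The simple-function approximation of $f$ shows only that every vague-basic neighbourhood \emph{contains} a neighbourhood from the topology generated by the maps $\varrho\mapsto\varrho(B_F)$, i.e.\ that $\mathfrak{W}_{v1}$ is \emph{coarser} than that countably generated (hence second-countable) topology, which under the hypothesis is just the setwise topology again. It does not show the two topologies are equal, and in general they are not: for $X=\{0\}\cup\{1/n:n\geq 1\}$ with the Euclidean topology the elementary events are the singletons, $\delta_{1/n}$ converges to $\delta_0$ vaguely, yet $\delta_{1/n}(\{0\})=0$ does not converge to $\delta_0(\{0\})=1$, so $\varrho\mapsto\varrho(\{0\})$ is not vaguely continuous and $\mathfrak{W}_{v1}$ is strictly coarser than the topology of the maps $\varrho\mapsto\varrho(B_F)$. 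Moreover, a topology coarser than a second-countable one need not be second-countable (any non-first-countable Tychonoff topology on a countable set, e.g.\ the Arens--Fort space, is coarser than the discrete topology, which is second-countable there). So your reduction ``to second-countability of a countable product of copies of $[0,1]$'' establishes second countability of $\mathfrak{W}_{s1}$, not of $\mathfrak{W}_{v1}$ or $\mathfrak{W}_{v2}$.

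What is actually needed is precisely the step you deferred: a countable family $\mathcal{F}_0$ of test functions for which $\{W_{v1}(\nu,f,1/j):\nu\in\prod_1,\ f\in\mathcal{F}_0,\ j\in\mathbb{N}\}$ is a basis. The natural route is to show $C_{sc}(X)$ (resp.\ $C_0(X)$) is sup-norm separable, since $|\int f\,d\varrho-\int g\,d\varrho|\leq\Vert f-g\Vert_\infty$ uniformly over $\mathcal{M}(X)$, so a sup-norm dense countable set of test functions generates the same topology. Under the hypothesis this is available at least when $X$ is $T_1$ (in particular LCH): singletons are then Borel, so the elementary events are singletons and $X$ is countable; a countable LCH space is metrizable and $\sigma$-compact, whence $C_{sc}(X)$ is sup-norm separable and the countable basis follows. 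Two smaller points: your opening reduction to $\mathcal{M}_{v1}(X)$ via Theorem \ref{thm4} uses an LCH hypothesis that Lemma \ref{lem8} does not state (and second countability of the finer topology $\mathfrak{W}_{v2}$ would not by itself transfer to $\mathfrak{W}_{v1}$, for the same reason as above), so both topologies should be treated, e.g.\ by running the argument with $C_0(X)$ as well. For comparison, the paper's own proof only verifies density of a countable set and then appeals to the comparison with the setwise topology, so it leans on the same fragile inference; your write-up at least isolates where the real work lies, but does not yet carry it out.
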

\begin{proof}
First,  if the $\sigma$-algebra $\mathcal{B}$ has at most countably many elementary events, one can check that the set 
\begin{center}
$\prod_2=\cup_{F\subset\mathbb{N},\#F<\infty,j\in(\mathbb{N}\setminus F)}\big\{\nu: \nu(B_i)\in\mathbb{Q}\cap[0,1] \mbox{ for any } i\in F\cup\{j\} \mbox{ and } \sum_{i\in F\cup\{j\}}\nu(A_i)=1\big\}$
\end{center}
is still a dense subset in $\mathcal{M}_{v1}(X)$ and $\mathcal{M}_{v2}(X)$, by a similar argument as in the proof of Lemma \ref{lem2}. Note that the setwise topology $\mathfrak{W}_{s1}$ is finer than the Type-I and Type-II vague topology on $\mathcal{M}(X)$, so $\mathcal{M}_{v1}(X)$ and $\mathcal{M}_{v2}(X)$ are both second-countable in this case.
\end{proof}

Now we are in a position to prove Theorem \ref{thm3}.\\
\bigskip

Proof of Theorem \ref{thm3}:

\begin{proof}
The separability  follows from  the fact that $\prod_2$ is a countable dense subset of $\mathcal{M}_{v1}(X)$ and $\mathcal{M}_{v2}(X)$. The metrizability follows from a combination of Lemma \ref{lem6}, \ref{lem5} and \ref{lem8}, in virtue of the Urysohn Metrization Theorem.
\end{proof}

Now we go towards our final goal in this section-the proof of Theorem \ref{thm2}. Together  with Theorem \ref{thm8}, these results provide some incisive distinctions between the vague (weak) topology and the setwise topology on the probability space $\mathcal{M}(X)$ with a compact metric space $X$. A locally compact and $\sigma$-compact metric space $X$ admits a countable dense subset, say 
\begin{center}
$X_d=\{x_1,x_2,\cdots\}$.
\end{center}
Let $\Xi_2\subset\mathcal{M}(X)$ be the collection of all the discrete probability measures supported on finite points in $X_d$. One can easily check that $\Xi_2$ is separable under either of the two types of vague topology on it.

\begin{proposition}[Tao]\label{pro1}
If $X$ is a locally compact and $\sigma$-compact metric space endowed with a metric $\rho$, then $\Xi_2$ is a dense subset of $\mathcal{M}(X)$ under the Type-I or Type-II vague topology.
\end{proposition}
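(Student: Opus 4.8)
The plan is to show that for any $\nu \in \mathcal{M}(X)$, any finite family of test functions $f_1,\dots,f_m \in C_{sc}(X)$ (it suffices to work with the Type-I vague topology, since by Theorem~\ref{thm4} it coincides with the Type-II vague topology on the LCH space $X$, and clearly $X$ is LCH here), and any $\epsilon>0$, one can find a discrete measure $\mu \in \Xi_2$ supported on finitely many points of $X_d$ with $|\int_X f_j\,d\mu - \int_X f_j\,d\nu| < \epsilon$ for all $1 \le j \le m$. The standard device is to partition the (compact) union of supports $S := \bigcup_{j=1}^m \operatorname{Supp}(f_j)$ into finitely many small Borel pieces $E_1,\dots,E_k$ on each of which every $f_j$ oscillates by less than $\epsilon/2$; this is possible because each $f_j$ is uniformly continuous on the compact set $S$, so choosing $\delta$ small enough that $\rho(x,y)<\delta$ forces $|f_j(x)-f_j(y)|<\epsilon/2$ for all $j$, and covering $S$ by finitely many $\delta/2$-balls (compactness) and disjointifying gives such a partition.

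Next, in each nonempty piece $E_i$ pick a point $y_i \in S$; density of $X_d$ lets us instead pick $x_{n_i} \in X_d$ within $\delta/2$ of $y_i$, so that $|f_j(x_{n_i}) - f_j(x)| < \epsilon/2$ for every $x \in E_i$ and every $j$ (enlarging the oscillation bound to $\delta$ accommodates this). Define the discrete measure $\mu = \sum_{i=1}^k \nu(E_i)\,\delta_{x_{n_i}}$, plus, if necessary, a remaining atom $\big(1-\nu(S)\big)\delta_{x_1}$ to make $\mu$ a probability measure; since $f_j$ vanishes off $S$, this extra atom contributes nothing to $\int f_j\,d\mu$. Then
$$
\Big| \int_X f_j\,d\mu - \int_X f_j\,d\nu \Big|
= \Big| \sum_{i=1}^k \int_{E_i} \big(f_j(x_{n_i}) - f_j(x)\big)\,d\nu(x) \Big|
\le \sum_{i=1}^k \int_{E_i} \tfrac{\epsilon}{2}\,d\nu
\le \tfrac{\epsilon}{2} < \epsilon,
$$
using $\nu(S) \le 1$. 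Hence $\mu \in W_{v1}(\nu, f_j, \epsilon)$ for all $j$, which shows $\Xi_2$ meets every basic neighbourhood of $\nu$, i.e.\ $\Xi_2$ is dense.

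The only genuinely delicate point is bookkeeping the fact that the $f_j$ need not be supported on all of $X$ and that $X$ itself may be noncompact: one must be careful to perform the partition and the approximation only on the compact set $S$, and to observe that the ``leftover mass'' $1-\nu(S)$ can be dumped on a single fixed point of $X_d$ without affecting any integral. Everything else is the routine uniform-continuity-plus-compactness argument. I would also remark that $\sigma$-compactness together with local compactness is exactly what guarantees the countable dense set $X_d$ exists (a locally compact $\sigma$-compact metric space is second countable, hence separable), so the hypothesis is used there as well as implicitly in ensuring $X$ is LCH so that Theorem~\ref{thm4} applies.
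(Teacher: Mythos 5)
Your proposal is correct and follows essentially the same route as the paper's proof: discretize $\nu$ by covering a compact set with finitely many small pieces built from balls around points of $X_d$, transfer the $\nu$-mass of each piece to a nearby point of $X_d$, and park the leftover mass on one extra atom. The paper works instead with Type-II test functions $f\in C_0(X)$, which forces it to first use $\sigma$-compactness to find a compact $X_c$ with $\nu(X\setminus X_c)<\epsilon$ and to accept an error of order $\epsilon\Vert f\Vert_\infty$ from the leftover atom; your choice of $C_{sc}(X)$ test functions lets the compact set be the union of supports and makes the leftover atom contribute nothing, and your treatment of finitely many $f_j$ at once is actually more careful than the paper's single-$f$ argument with respect to the basis/subbasis distinction. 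Two small points need patching. First, the leftover atom must be placed at a point of $X_d$ \emph{outside} $S$: as written you put it at $x_1$, which may lie in $S$ where some $f_j$ is nonzero; the fix is immediate, since if $\nu(S)<1$ then $S\neq X$, $S$ is closed, so the nonempty open set $X\setminus S$ meets the dense set $X_d$. Second, uniform continuity of $f_j$ \emph{on the compact set $S$} does not by itself control $|f_j(x_{n_i})-f_j(x)|$ when the chosen point $x_{n_i}\in X_d$ falls outside $S$; you should instead invoke the (standard, and true on any metric space) fact that a continuous function with compact support is uniformly continuous on all of $X$, after which the estimate goes through verbatim.
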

\begin{proof}
It suffices for us to show that $\Xi_2$ is a dense subset of $\mathcal{M}(X)$ under the Type-II vague topology. Consider a probability measure $\nu\in\mathcal{M}(X)$. First, for any $\epsilon>0$, since $X$ is a locally compact and $\sigma$-compact metric space, there exists some compact set $X_c\subset X$, such that 
\begin{center}
$\nu(X\setminus X_c)<\epsilon$.
\end{center}
Without loss of generality we assume $X_c\neq X$. Since $X_d$ is dense in $X$ and the residual set $X_c'$ of the compact (closed) set $X_c$ is open, we choose some point $x_*\in X_d\cap X_c'$. Let $I\subset\mathbb{N}$ be the collection of all indexes such that $x_i\in X_c$ for $i\in I$. For any $f\in C_0(X)$ and any $\epsilon>0$, there exists some $\delta>0$ independent of $\epsilon$, such that 
\begin{equation}\label{eq2}
|f(x)-f(y)|<\epsilon
\end{equation}
for any $\rho(x,y)<\delta$ and $x,y\in X_c$. For any $x\in X$ and $r>0$, let $B(x,r)$ be the open ball in $X$ centred at $x$ with radius $r$. Let $N_1\in \mathbb{N}$ be large enough such that 
\begin{center}
$\frac{2}{N_1}<\delta$. 
\end{center}
Since $\cup_{i\in I}B(x_i, \frac{1}{N_1})$ covers $X_c$ and $X_c$ is compact, there exists a collection of finite indexes 
\begin{center}
$I_{N_2}=\{i_1, i_2, \cdots, i_{N_2}\}\subset I$ 
\end{center}
for some $N_2\in \mathbb{N}$, such that $\cup_{i\in I_{N_2}}B(x_i, \frac{1}{N_1})$ covers $X_c$. Note that for any $i\in I_{N_2}$ we have
\begin{equation}
\max_{x\in B(x_i, \frac{1}{N_1})\cap X_c} f(x)-\min_{x\in B(x_i, \frac{1}{N_1})\cap X_c} f(x)<\epsilon
\end{equation}  
considering (\ref{eq2}).

Now define a discrete measure $\nu_\epsilon\in \Xi_2$ supported on $\cup_{i\in I_{N_2}}x_i\cup\{x_*\}$ as following.

\begin{itemize}
\item $\nu_\epsilon(\{x_{i_1}\})=\nu\big(B(x_{i_1},\frac{1}{N_1})\cap X_c\big)$.

\item $\nu_\epsilon(\{x_{i_2}\})=\nu\Big(\big(B(x_{i_2},\frac{1}{N_1})\cap X_c\big)\setminus B(x_{i_1},\frac{1}{N_1})\Big)$. 

\item $\nu_\epsilon(\{x_{i_3}\})=\nu\Big(\big(B(x_{i_3},\frac{1}{N_1})\cap X_c\big)\setminus \cup_{j=1}^2 B(x_{i_j},\frac{1}{N_1})\Big)$. 

 $\cdots$

\item $\nu_\epsilon(\{x_{i_{N_2}}\})=\nu\Big(\big(B(x_{i_{N_2}},\frac{1}{N_1})\cap X_c\big)\setminus \cup_{j=1}^{N_2-1} B(x_{i_j},\frac{1}{N_1})\Big)$. 

\item $\nu_\epsilon(\{x_*\})=\nu(X\setminus X_c)$.

\end{itemize}

Compare the integration of $f$ with respect to $\nu$ and $\nu_\epsilon$ over $X$, we have
\begin{center}
$
\begin{array}{ll}
& \int_X fd\nu-\int_X fd\nu_\epsilon\\
= & \int_{X_c} fd\nu-\int_{X_c} fd\nu_\epsilon+\int_{X\setminus X_c} fd\nu-\int_{X\setminus X_c} fd\nu_\epsilon \\
= & \int_{B(x_{i_1},\frac{1}{N_1})\cap X_c} fd(\nu-\nu_\epsilon)+\int_{\big(B(x_{i_2},\frac{1}{N_1})\cap X_c\big)\setminus B(x_{i_1},\frac{1}{N_1})} fd(\nu-\nu_\epsilon)+\cdots\\
& +\int_{\big(B(x_{i_{N_2}},\frac{1}{N_1})\cap X_c\big)\setminus \cup_{j=1}^{N_2-1} B(x_{i_j},\frac{1}{N_1})} fd(\nu-\nu_\epsilon)+\int_{X\setminus X_c} fd(\nu-\nu_\epsilon) \\
\leq & \epsilon\nu_\epsilon(\{x_{i_1}\})+\epsilon\nu_\epsilon(\{x_{i_2}\})+\cdots+\epsilon\nu_\epsilon(\{x_{i_{N_2}}\})+2\epsilon\Vert f\Vert_{\infty}\\
\leq & (1+2\Vert f\Vert_{\infty})\epsilon.
\end{array}
$
\end{center} 

This means that
\begin{center}
$\nu_\epsilon\in W_{v2}\big(\nu,f, (1+2\Vert f\Vert_{\infty})\epsilon\big)$,
\end{center} 
which implies $\Xi_2$ is a dense subset of $\mathcal{M}(X)$ under the Type-II vague topology.

\end{proof}

Equipped with Proposition \ref{pro1}, we are ready to prove Theorem \ref{thm2}.

\bigskip

Proof of Theorem \ref{thm2}:

\begin{proof}
First note that if $X$ is a compact metric space, then according to Theorem \ref{thm4}, it suffices for us to prove the separability and metrizability of either space $\mathcal{M}_{v1}(X)$ or $\mathcal{M}_{v2}(X)$.  If $X$ is a compact metric space, all the probability measures in $\mathcal{M}(X)$ are  Radon measures. 
A compact metric space is of course locally compact  and  $\sigma$-compact, so the conclusion of separability of $\mathcal{M}_{v1}(X)$ follows directly from Proposition \ref{pro1}. 

In the following we show $\mathcal{M}_{v1}(X)$ is metrizable in case $X$ is a compact metric space. Considering Lemma \ref{lem6} and Lemma  \ref{lem5}, in virtue of the Urysohn Metrization Theorem, it suffices for us to show $\mathcal{M}_{v1}(X)$ is second-countable. Let $\Pi_3\subset \Xi_2$ be a countable dense subset of $\mathcal{M}_{v1}(X)$. Since
\begin{center}
$C(X)=C_{sc}(X)=C_0(X)$,
\end{center}
on any compact metric space $X$, according to \cite[Proposition 1.10.20.]{Tao}, $C_{sc}(X)$ is separable. Let $\Xi_3\subset C_{sc}(X)$ be a countable dense subset (note that \cite[Proposition 1.10.20.]{Tao} actually asserts that $C_{sc}(X)$ is separable under the infinite norm on it, this obviously implies the separability under the $L^1$ norm on it). One can easily check that the following collection of open sets is a countable basis under $\mathfrak{W}_{v1}$,
\begin{center}
$\cup_{\nu\in\Pi_3, f\in \Xi_3, n\in\mathbb{N}} W_{v1}(\nu,f, \frac{1}{n})$.
\end{center}
This justifies that $\mathcal{M}_{v1}(X)$ is second-countable.

\end{proof}

Theorem \ref{thm2} together with Theorem \ref{thm8} have some interesting applications to the probability spaces on some popular ambient compact metric spaces.

\begin{corollary}
Let $X=[0,1]$ or $X=\overline{B(\bold{0},1)}\subset\mathbb{R}^n$ be endowed with the Euclidean metric on it. Then the probability space $\mathcal{M}(X)$ is separable and  metrizable under the Type-I or Type-II vague topology, while it is not separable and not metrizable under the Type-I or Type-II setwise topology. 
\end{corollary}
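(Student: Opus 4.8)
The plan is to verify that each of the two displayed ambient spaces, $X=[0,1]$ and $X=\overline{B(\mathbf{0},1)}\subset\mathbb{R}^n$, is a compact metric space, and then to invoke Theorem \ref{thm2} for the positive (vague) half and Theorem \ref{thm8} for the negative (setwise) half. First I would observe that both spaces, equipped with the Euclidean metric, are closed and bounded subsets of a finite-dimensional Euclidean space, hence compact by the Heine--Borel theorem, and they are metric spaces as subspaces of $\mathbb{R}^n$. This places us squarely in the hypotheses of Theorem \ref{thm2}, which immediately gives that $\mathcal{M}_{v1}(X)$ (equivalently $\mathcal{M}_{v2}(X)$, by Theorem \ref{thm4}, since a compact metric space is LCH) is separable and metrizable. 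That settles the first assertion.

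For the second assertion, I would apply Theorem \ref{thm8}, according to which $\mathcal{M}_{s1}(X)$ (equivalently $\mathcal{M}_{s2}(X)$ by Theorem \ref{thm7}) is separable or metrizable \emph{if and only if} $\mathcal{B}$ admits at most countably many elementary events. So it suffices to exhibit uncountably many elementary events in the Borel $\sigma$-algebra of $X$. The key point is that for $X=[0,1]$ or $X=\overline{B(\mathbf{0},1)}$, every singleton $\{x\}$ is a closed set, hence Borel, and it clearly contains no non-empty proper Borel subset; thus each singleton is an elementary event. Since $X$ is uncountable (it has the cardinality of the continuum), this produces uncountably many elementary events, so Theorem \ref{thm8} forces $\mathcal{M}_{s1}(X)$ to be neither separable nor metrizable.

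Assembling these two observations proves the Corollary for both listed choices of $X$. There is essentially no obstacle here: the only thing to check with any care is that singletons are Borel (which is immediate from their being closed in a metric space) and that they are elementary in the sense of the definition preceding Theorem \ref{thm3} (which is immediate since a singleton has no non-empty proper subset at all). The substance of the argument is entirely carried by Theorems \ref{thm2}, \ref{thm4}, \ref{thm7} and \ref{thm8}, which have already been established; the Corollary is just the specialization of those results to two familiar compact metric spaces, together with the trivial cardinality count $\#X=\mathfrak{c}>\aleph_0$.
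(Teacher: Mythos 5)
Your proposal is correct and matches the paper's intended argument exactly: the corollary is stated there as an immediate consequence of Theorem \ref{thm2} (compact metric space, hence separable and metrizable under the vague topology) and Theorem \ref{thm8} (uncountably many elementary events --- the singletons --- hence neither separable nor metrizable under the setwise topology). Your checks that singletons are Borel and elementary, and that the spaces are compact metric, are precisely the routine verifications needed.
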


\section{Relative compactness of families of probabilities in $\mathcal{M}(X)$}

In this section we deal with the relative compactness of families of probability measures in  $\mathcal{M}(X)$, especially under the setwise topology.  It seems to us that a general condition for families of probability measures to be relatively compact is difficult when we merely assume the ambient space $X$ is a topological space, so we decide to limit our attention to metric ambient spaces in this section. 

We first recall some results on relative compactness of families of probability measures in  $\mathcal{M}(X)$  under the vague and weak topology. In a similar way as we define setwisely relative compactness, we can define vague, weak or TV relative compactness of families of probability measures in  $\mathcal{M}(X)$. A family $\Xi$  of probability measures in  $\mathcal{M}(X)$ is said to be \emph{tight} if for any small $\epsilon>0$, there exists some compact set $K\subset X$ such that
\begin{center}
$\nu(K)>1-\epsilon$
\end{center}
for any $\nu\in\Xi$. In case of $X$ being a metric space, Prohorov gave the following condition for weakly relative compactness of families of probability measures in  $\mathcal{M}(X)$, see \cite[Theorem 5.1, Theorem 5.2]{Bil1}.

\begin{Prohorov's Theorem}
Let $X$ be a metric space. If a family of probability measures $\Xi\subset \mathcal{M}(X)$ is tight, then it is weakly relatively compact. Conversely, if $X$ is a separable and complete metric space, then $\Xi$ is tight if it is weakly relatively compact.
\end{Prohorov's Theorem}

For the vaguely relative compactness of families of locally finite measures, see \cite[Theorem 4.2]{Kallen1}.

As to the setwisely relative compactness of a family $\Xi\subset\mathcal{M}(X)$, the condition of tightness  is obviously inadequate, even if we require $X$ to be of the best topological space in our consideration.
\begin{exm}\label{exa2}
Let $X=[0,1]$ endowed with the Euclidean metric on it. Let 
\begin{center}
$\Xi_4=\{\delta_{\frac{1}{n}}\}_{n=1}^\infty$
\end{center}
be the sequence of Dirac measures supported on $\{\frac{1}{n}\}$ for an individual $n\in\mathbb{N}$.
\end{exm}

In the above example the ambient space $X$ is a compact, separable, complete metric space, so $\Xi_4$ is tight, while one can not find any setwisely convergent subsequence in $\Xi_4$. The reason of obstacle for the appearance of a setwisely convergent subsequence in $\Xi_4$ is that there are fractures of mass transportation between some open sets and their closed (compact) subsets when taking limit(sup) along the sequence, that is, for the open set $U=(0,1)\subset X$ in Example \ref{exa2}, we have 
\begin{center}
$\limsup_{i\rightarrow\infty} \delta_{\frac{1}{n_i}}(U)=1>\sup_{K\subset U, K \mbox{ is closed}}\limsup_{i\rightarrow\infty} \delta_{\frac{1}{n_i}}(K)=0$
\end{center}
for any subsequence $\{n_i\}_{i=1}^\infty\subset\mathbb{N}$. This inspires us the condition  (\ref{eq3}) on setwisely relative compactness of families of probability measures in  $\mathcal{M}(X)$.

\begin{lemma}\label{lem4}
A compact metric space $X$ admits a countable base  $\mathcal{B}_c$ such that if $x\in U$ for some open set $U\subset X$, then there is some $B\in\mathcal{B}_c$ such that 
\begin{center}
$x\in B\subset \bar{B}\subset U$,
\end{center}
in which $\bar{B}$ is the closure of $B$.
\end{lemma}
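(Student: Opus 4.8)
\textbf{Proof proposal for Lemma \ref{lem4}.}

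The plan is to combine metrizability with compactness in the standard way. First I would recall that a compact metric space $X$ is separable, hence second-countable: fixing a countable dense set $\{x_k\}_{k=1}^\infty$ and rational radii, the balls $\{B(x_k, q) : k\in\mathbb{N}, q\in\mathbb{Q}^+\}$ form a countable base $\mathcal{B}_0$ for the topology of $X$. The candidate for the desired base is then the countable family $\mathcal{B}_c := \{B \in \mathcal{B}_0 : \bar{B} \subset B' \text{ for some } B' \in \mathcal{B}_0\}$ — or, more cleanly, the collection of all $B(x_k, q)$ whose closure is contained in some other member of $\mathcal{B}_0$. The point is to show $\mathcal{B}_c$ is still a base with the stated ``closure trapping'' property.

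The key step is the following: given $x \in U$ with $U$ open, I would use that $X$ is regular (a metric space is normal, in particular regular) together with second-countability. Because metric spaces are normal, one can separate the point $x$ from the closed set $X \setminus U$: there is an open set $V$ with $x \in V \subset \bar{V} \subset U$. Concretely, with a metric $\rho$, since $x \notin X\setminus U$ and $X \setminus U$ is closed, $\operatorname{dist}(x, X\setminus U) = 3r > 0$ for some $r > 0$, and then $B(x, r) \subset \overline{B(x,r)} \subset B(x, 2r) \subset U$. Now pick $B \in \mathcal{B}_0$ with $x \in B \subset B(x, r)$ (possible since $\mathcal{B}_0$ is a base); then $\bar{B} \subset \overline{B(x,r)} \subset B(x,2r) \subset U$, and in particular $\bar{B} \subset B(x,2r) \in \mathcal{B}_0$, so $B \in \mathcal{B}_c$. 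This simultaneously shows $B \in \mathcal{B}_c$, that $x \in B \subset \bar{B} \subset U$, and (taking $U$ arbitrary) that $\mathcal{B}_c$ is a base for the topology of $X$. Finally, compactness of $X$ is actually not strictly needed for the argument beyond guaranteeing second-countability; I would remark that it is stated for $X$ compact metric only because that is the setting of Theorem \ref{thm1}, and one could replace ``compact metric'' by ``separable metric'' throughout this lemma.

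I do not expect a serious obstacle here: the only thing requiring care is bookkeeping to ensure $\mathcal{B}_c$ remains \emph{countable} (it is a subfamily of the countable $\mathcal{B}_0$) and that the witness $\bar{B} \subset B(x,2r)$ can be arranged with $B(x,2r)$ itself a base element — which is automatic since we built $\mathcal{B}_0$ from rational-radius balls and can shrink $2r$ to a nearby rational. The mild subtlety worth a sentence in the writeup is that ``there is some $B \in \mathcal{B}_c$ with $x \in B \subset \bar B \subset U$'' is exactly the conclusion of the normality/regularity argument applied inside the countable base, so the lemma is really just the statement that a compact (or separable) metric space has a countable \emph{regular} base, which is folklore; I would present the explicit $\rho$-ball construction above for self-containedness.
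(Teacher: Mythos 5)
Your argument is correct, and it reaches the same underlying fact as the paper but by a different, self-contained route: the paper simply cites a theorem of Billingsley (p.~237 of \emph{Convergence of Probability Measures}) for the existence of a countable collection $\mathcal{B}_{c2}$ with the closure-trapping property and then takes $\mathcal{B}_c=\mathcal{B}_{c1}\cup\mathcal{B}_{c2}$ with $\mathcal{B}_{c1}$ a countable base, whereas you build the base explicitly from rational-radius balls centred at a countable dense set and verify the trapping property by the distance estimate $\operatorname{dist}(x,X\setminus U)=3r$, $B(x,r)\subset\overline{B(x,r)}\subset B(x,2r)\subset U$. Your version buys self-containedness and makes transparent that only separability (not compactness) of the metric space is used, which is consistent with the paper's own proof. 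One cosmetic wrinkle: as literally written, your membership test for $\mathcal{B}_c$ requires a witness $B'\in\mathcal{B}_0$ containing $\bar B$, but the set $B(x,2r)$ you exhibit need not lie in $\mathcal{B}_0$ (its centre $x$ is generally not one of the $x_k$ and $2r$ need not be rational); shrinking the radius does not repair the centre. This is easily fixed --- either replace $B(x,2r)$ by $B(x_k,q')$ for the same $x_k$ and a suitable rational $q'$ with $q<q'<2r-\rho(x,x_k)$, or, simpler still, drop the subfamily altogether and take $\mathcal{B}_c=\mathcal{B}_0$, choosing $B=B(x_k,q)$ with $\rho(x,x_k)<r/2$ and $q\in\mathbb{Q}\cap(r/2,r)$, so that $\bar B\subset\{y:\rho(y,x_k)\le q\}\subset B(x,2r)\subset U$ directly. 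Either repair leaves the proof complete.
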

\begin{proof}
First, since a compact metric space is second-countable, we can find a countable base $\mathcal{B}_{c1}$.  Then according to \cite[p237, Theorem]{Bil1}, there is a countable collection $\mathcal{B}_{c2}$ of open sets such that  if $x\in U$ for some open set $U\subset X$, then there is some $B\in\mathcal{B}_{c2}$ such that 
\begin{center}
$x\in B\subset \bar{B}\subset U$.
\end{center}
Then the countable base $\mathcal{B}_c=\mathcal{B}_{c1}\cup\mathcal{B}_{c2}$ satisfies the requirement of the the lemma.
\end{proof}

\bigskip

Proof of Theorem \ref{thm1}:

\begin{proof}
First, if a family of probability measures $\Xi$ is setwisely relatively compact,  then for any sequence $\{\nu_n\}_{n=1}^\infty\subset \Xi$, we can find a subsequence $\{\nu_{n_i}\}_{i=1}^\infty$, such that
\begin{center}
$\nu_{n_i}\stackrel{s}{\rightarrow}\nu$ 
\end{center}
as $i\rightarrow\infty$ for some probability measure $\nu\in\mathcal{M}(X)$. Since 
\begin{center}
$\lim_{i\rightarrow\infty} \nu_{n_i}(K)=\nu(K)$ and $\lim_{i\rightarrow\infty} \nu_{n_i}(U)=\nu(U)$
\end{center}
for any open set $U\subset X$ and closed (any closed set is compact since we are assuming the ambient space $X$ is compact now) set $K\subset U$, we have
\begin{center}
$\lim_{i\rightarrow\infty} \nu_{n_i}(U)=\sup_{K\subset U, K \mbox{ is closed}}\lim_{i\rightarrow\infty} \nu_{n_i}(K)$
\end{center}
as $\nu$ is a regular measure on the metric space $X$. This of course guarantees that the subsequence $\{\nu_{n_i}\}_{i=1}^\infty$ satisfies (\ref{eq3}). 

Now we show the inverse is also true. Suppose for any sequence of probability measures $\{\nu_n\}_{n=1}^\infty\subset \Xi$, we can find a subsequence $\{\nu_{n_i}\}_{i=1}^\infty$ satisfying (\ref{eq3})
for any open set $U\subset X$. Upon the technique in constructing a weakly convergent subsequence of probability measures in proving the Prohorov's Theorem \cite[p60]{Bil1}, we will find a setwisely convergent subsequence of the sequence $\{\nu_{n_i}\}_{i=1}^\infty$.  Since $X$ is a compact metric space, it is second-countable. In virtue of Lemma \ref{lem4}, let $\mathcal{B}_c=\{B_i\}_{i=1}^\infty$ be a countable basis such that if $x\in U$ for some open set $U\subset X$, there is some $j\in\mathbb{N}$ such that 
\begin{center}
$x\in B_j\subset \bar{B}_j\subset U$,
\end{center}
in which $\bar{B}_j$ is the closure of $B_j$. Let 
\begin{center}
$\mathcal{\bar{B}}_c=\big\{B: B=\cup_{j=1}^n \bar{B}_{i_j} \mbox{ with } \{i_j\}_{j=1}^n\subset\mathbb{N}\big\}$.
\end{center}
Now consider the sequence  $\{\nu_{n_i}\}_{i=1}^\infty$ satisfying (\ref{eq3}). Since $\mathcal{\bar{B}}_c$ is countable, we can find a subsequence $\{\nu_{n_{i_j}}\}_{j=1}^\infty$ of $\{\nu_{n_i}\}_{i=1}^\infty$, such that $\lim_{j\rightarrow\infty} \nu_{n_{i_j}}(B)$ exists for any $B\in\mathcal{\bar{B}}_c$. Then we can find a probability measure $\nu\in\mathcal{M}(X)$, such that
\begin{equation}\label{eq4}
\nu(U)=\sup_{B\subset U, B\in\mathcal{\bar{B}}_c}\lim_{j\rightarrow\infty} \nu_{n_{i_j}}(B)\leq\liminf_{j\rightarrow\infty} \nu_{n_{i_j}}(U)
\end{equation}
for any open set $U\subset X$. We claim now that the limit of the sequence $\lim_{j\rightarrow\infty} \nu_{n_{i_j}}(U)$ exists for any open set $U\subset X$, moreover, we have
\begin{equation}\label{eq7}
\nu(U)=\lim_{j\rightarrow\infty} \nu_{n_{i_j}}(U)
\end{equation}  
for any open set $U\subset X$. This is enough to justify 
\begin{center}
$\nu_{n_{i_j}}\stackrel{s}{\rightarrow}\nu$ 
\end{center}
as $j\rightarrow\infty$ in virtue of Theorem \ref{thm9}, which completes the proof. To show the claim, note that the sequence $\{\nu_{n_{i_j}}\}_{j=1}^\infty$ satisfies
\begin{equation}\label{eq5}
\limsup_{j\rightarrow\infty} \nu_{n_{i_j}}(U)=\sup_{K\subset U, K \mbox{ is closed}}\limsup_{j\rightarrow\infty} \nu_{n_{i_j}}(K)
\end{equation}
for any open set $U\subset X$. Since any metric space is normal \cite{Tao}, for any closed set $K\subset U$,  we can find an open set $U_K$, such that $K\subset U_K\subset \bar{U}_K\subset U$. As $U_K$ can be written as an union of sets in $\mathcal{B}_c$ and $K$ is compact, then $U_K$ can be written as a finite union of sets in $\mathcal{B}_c$ whose closures are all in $U$. This means that there exists some $B\in \bar{B}_c$ such that $K\subset B$, which guarantees that  
\begin{equation}\label{eq6}
\sup_{B\subset U, B\in\mathcal{\bar{B}}_c}\lim_{j\rightarrow\infty} \nu_{n_{i_j}}(B)=\sup_{K\subset U, K \mbox{ is closed}}\limsup_{j\rightarrow\infty} \nu_{n_{i_j}}(K)
\end{equation}
for any open set $U\subset X$. Now combining (\ref{eq4}) (\ref{eq5}) and (\ref{eq6}) together, we have
\begin{center}
$\limsup_{j\rightarrow\infty} \nu_{n_{i_j}}(U)=\nu(U)\leq\liminf_{j\rightarrow\infty} \nu_{n_{i_j}}(U)$
\end{center} 
for any open set $U\subset X$, which implies our claim and (\ref{eq7}).

\end{proof}

It seems that checking the condition (\ref{eq3}) holding for any open $U\subset X$ is a rather tough job in Theorem \ref{thm1}, in fact, we only need to check it holds for countably many open  $U\subset X$. 

\begin{corollary}\label{cor2}
For a compact metric space $X$, there exists a countable collection $\mathcal{B}_{cf}$ of open sets in $X$, such that a family of probability measures $\Xi\subset \mathcal{M}(X)$ is setwisely relatively compact if and only if for any sequence of probability measures $\{\nu_n\}_{n=1}^\infty\subset \Xi$, there is a subsequence $\{\nu_{n_i}\}_{i=1}^\infty$, such that (\ref{eq3}) holds for any $U\in\mathcal{B}_{cf}$.
\end{corollary}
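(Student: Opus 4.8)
The plan is to identify a countable family $\mathcal{B}_{cf}$ of open sets that is ``rich enough'' to witness the equality \eqref{eq3} for all open sets simultaneously, and the natural candidate is already present in the proof of Theorem \ref{thm1}: take $\mathcal{B}_c=\{B_i\}_{i=1}^\infty$ to be the countable base furnished by Lemma \ref{lem4} (so that every open $U$ is a union of members of $\mathcal{B}_c$ whose closures still sit inside $U$), and let $\mathcal{B}_{cf}$ be the collection of all \emph{finite unions} of members of $\mathcal{B}_c$. This is still countable. One direction is free: setwise relative compactness gives, for every sequence in $\Xi$, a setwisely convergent subsequence, and the argument in the first half of the proof of Theorem \ref{thm1} shows such a subsequence satisfies \eqref{eq3} for \emph{every} open set, in particular for every $U\in\mathcal{B}_{cf}$.

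For the converse I would run the construction in the second half of the proof of Theorem \ref{thm1} essentially verbatim, checking that only the values on $\mathcal{B}_{cf}$ are ever used. Given a sequence $\{\nu_n\}\subset\Xi$, the hypothesis yields a subsequence $\{\nu_{n_i}\}$ satisfying \eqref{eq3} for all $U\in\mathcal{B}_{cf}$. Since $\mathcal{\bar B}_c$ (finite unions of closures $\bar B_{i_j}$) is countable, pass to a further subsequence $\{\nu_{n_{i_j}}\}$ along which $\lim_j\nu_{n_{i_j}}(B)$ exists for every $B\in\mathcal{\bar B}_c$, and define $\nu$ by $\nu(U)=\sup_{B\subset U,\,B\in\mathcal{\bar B}_c}\lim_j\nu_{n_{i_j}}(B)$ exactly as in \eqref{eq4}. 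The point is that the chain \eqref{eq4}--\eqref{eq5}--\eqref{eq6} forcing $\nu(U)=\lim_j\nu_{n_{i_j}}(U)$ for every open $U$ goes through once we know: (a) for every closed $K\subset U$ there is $B\in\mathcal{\bar B}_c$ with $K\subset B\subset U$ (this is the compactness-plus-Lemma~\ref{lem4} argument, unchanged), and (b) for the finitely many base sets $B_{i_1},\dots,B_{i_n}\in\mathcal{B}_c$ whose union contains $K$ with all closures in $U$, the set $\bigcup_{l=1}^n B_{i_l}$ lies in $\mathcal{B}_{cf}$, so \eqref{eq3} is available at that particular open set and gives \eqref{eq5} for it. Intersecting with the general identity \eqref{eq6} and the trivial inequality \eqref{eq4} yields $\limsup_j\nu_{n_{i_j}}(U)=\nu(U)\le\liminf_j\nu_{n_{i_j}}(U)$ for all open $U$, hence $\nu_{n_{i_j}}\stackrel{s}{\to}\nu$ by Theorem \ref{thm9}.

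The only genuinely new bookkeeping is to confirm that replacing ``\eqref{eq3} for all open $U$'' by ``\eqref{eq3} for all $U\in\mathcal{B}_{cf}$'' does not weaken step (a)/(b): when we approximate a closed set $K$ inside an open $U$ we produce a \emph{finite} subcover by base sets with closures in $U$, and its union is by construction a member of $\mathcal{B}_{cf}$, so the hypothesis applies precisely where it is needed. I expect this verification — tracking that every open set to which \eqref{eq3} is applied in the original proof is in fact a finite union of base sets — to be the main (and only mild) obstacle; everything else is a transcription of the proof of Theorem \ref{thm1}. A clean way to present it is: state $\mathcal{B}_{cf}:=\{\,\bigcup_{j=1}^n B_{i_j}:\{i_j\}_{j=1}^n\subset\mathbb{N}\,\}$ with $\{B_i\}$ as in Lemma \ref{lem4}, note countability, and then say that the proof of Theorem \ref{thm1} applies with ``open $U$'' read as ``$U\in\mathcal{B}_{cf}$'' wherever \eqref{eq3} is invoked, the approximation of closed sets by elements of $\mathcal{\bar B}_c$ being exactly what makes those two readings interchangeable.
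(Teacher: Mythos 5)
Your necessity direction is fine, but the sufficiency argument does not close the one step that actually matters. In the final chain of the proof of Theorem \ref{thm1}, the hypothesis (\ref{eq3}) enters as (\ref{eq5}), and it is invoked there at the \emph{arbitrary} open set $U$: the chain needs $\limsup_j\nu_{n_{i_j}}(U)=\sup_{K\subset U}\limsup_j\nu_{n_{i_j}}(K)$ for the given $U$, not merely for the finite union $\bigcup_{l=1}^{n}B_{i_l}$ produced in your step (b). What (a) and (b) actually give is, for each $V\in\mathcal{B}_{cf}$ with $V\subset U$, the bound $\limsup_j\nu_{n_{i_j}}(V)\le\nu(U)\le\liminf_j\nu_{n_{i_j}}(U)$; to conclude you would still need $\limsup_j\nu_{n_{i_j}}(U)\le\sup\{\limsup_j\nu_{n_{i_j}}(V):V\in\mathcal{B}_{cf},\,V\subset U\}$, i.e.\ a continuity-from-below property of $V\mapsto\limsup_j\nu_{n_{i_j}}(V)$ along $V_m\uparrow U$. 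This is exactly the identity (\ref{eq8}) that the paper's own proof inserts (and asserts without justification); it is not a transcription of anything in the proof of Theorem \ref{thm1}, and it is where the entire difficulty of the corollary is concentrated. So the claim that ``every open set to which (\ref{eq3}) is applied in the original proof is a finite union of base sets'' is false, and the proposal as written proves nothing beyond Theorem \ref{thm1} itself.

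Worse, the missing inequality is genuinely false, so no amount of bookkeeping will repair the transcription. Take $X=[0,1]$, let $\mathcal{B}_c$ consist of the open intervals with rational endpoints, fix the irrational point $x=1/\sqrt{2}$, choose pairwise distinct $x_n\to x$ with $x_n\neq x$, and set $\nu_n=\delta_{x_n}$. Every $V\in\mathcal{B}_{cf}$ has boundary contained in the rationals, so $x\notin\partial V$; hence either $x\in V$, in which case a closed ball $K=\overline{B}(x,\epsilon)\subset V$ contains the tail of $(x_n)$ and both sides of (\ref{eq3}) equal $1$, or $x\notin\overline{V}$, in which case the tail misses $V$ and both sides equal $0$. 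Thus \emph{every} subsequence satisfies (\ref{eq3}) on all of $\mathcal{B}_{cf}$. Yet for the open set $U=[0,1]\setminus\{x\}$ one has $\limsup_n\delta_{x_n}(U)=1$ while $\limsup_n\delta_{x_n}(V)=0$ for every $V\in\mathcal{B}_{cf}$ with $V\subset U$ and $\limsup_n\delta_{x_n}(K)=0$ for every closed $K\subset U$, so both (\ref{eq8}) and (\ref{eq3}) fail at $U$; and indeed $\{\delta_{x_n}\}$ has no setwisely convergent subsequence (any setwise limit $\mu$ of $\delta_{x_{n_i}}$ would satisfy $\mu(\{x_{n_i}:i\ge1\})=1$ while $\mu(\{x_{n_i}\})=0$ for each $i$). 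Since the union of the boundaries of any countable family of open sets is meager, such a point $x$ exists for any choice of $\mathcal{B}_{cf}$ in $[0,1]$, so the ``if'' direction cannot be obtained by restricting (\ref{eq3}) to a countable collection of open sets; the statement you are asked to prove needs a stronger hypothesis (e.g.\ retaining (\ref{eq3}) for all open $U$, or supplementing the countable collection by a condition that rules out mass escaping to a boundary point outside it).
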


\begin{proof}
It suffices for us to show the sufficiency. In case of $X$ being a compact metric space, let $\mathcal{B}_{c1}$ be a countable base. Now let
\begin{center}
$\mathcal{B}_{cf}=\{B: B \mbox{ is a finite union of sets in }\mathcal{B}_{c1}\}$.
\end{center}
$\mathcal{B}_{cf}$ is a countable set. Now suppose that for any sequence of probability measures $\{\nu_n\}_{n=1}^\infty\subset \Xi$, (\ref{eq3}) holds for some subsequence $\{\nu_{n_i}\}_{i=1}^\infty$ on any open set in $\mathcal{B}_{cf}$. Since $\mathcal{B}_{cf}$ is countable, we can find a subsequence $\{\nu_{n_{i_j}}\}_{j=1}^\infty$ of $\{\nu_{n_i}\}_{i=1}^\infty$, such that $\lim_{j\rightarrow\infty} \nu_{n_{i_j}}(B)$ exists for any $B\in\mathcal{B}_{cf}$.   For any open set $U\subset X$, let 
\begin{center}
$U=\cup_{j=1}^\infty A_j$
\end{center} 
with $A_j\in\mathcal{B}_{c1}$ for any $j\in\mathbb{N}$. One can show that
\begin{equation}\label{eq8}
\limsup_{i\rightarrow\infty} \nu_{n_i}(U)=\lim_{m\rightarrow\infty}\limsup_{i\rightarrow\infty} \nu_{n_i}(\cup_{j=1}^m A_j).
\end{equation}
Note that $\cup_{j=1}^m A_j\in\mathcal{B}_{cf}$. According to the assumption, we have
\begin{equation}\label{eq9}
\begin{array}{ll}
&\lim_{m\rightarrow\infty}\limsup_{i\rightarrow\infty} \nu_{n_i}(\cup_{j=1}^m A_j)\\
=&\lim_{m\rightarrow\infty}\sup_{K\subset \cup_{j=1}^m A_j, K \mbox{ is closed}}\limsup_{i\rightarrow\infty} \nu_{n_i}(K)\\
\leq & \sup_{K\subset U, K \mbox{ is closed}}\limsup_{i\rightarrow\infty} \nu_{n_i}(K).
\end{array}
\end{equation}
Now combining (\ref{eq8}), (\ref{eq9}) together with the following obvious fact
\begin{equation}
\limsup_{i\rightarrow\infty} \nu_{n_i}(U)\geq \sup_{K\subset U, K \mbox{ is closed}}\limsup_{i\rightarrow\infty} \nu_{n_i}(K),
\end{equation} 
we justify (\ref{eq3}) holds for the subsequence $\{\nu_{n_i}\}_{i=1}^\infty$ and any open set $U\subset X$. This finishes the proof in virtue of Theorem \ref{thm1}.
\end{proof}

One is recommended to compare Corollary \ref{cor2} with \cite[Theorem 3.3.]{FKZ1} and \cite[Lemma 3.4.]{FKZ1}. 

Theorem \ref{thm1} and Corollary \ref{cor2} of course can be applied to judge setwise convergence of sequences of probability measures in due course, with the aid of the following result. 

\begin{lemma}\label{lem3}
For any topological space $X$, a sequence of probability measures $\{\nu_n\}_{n=1}^\infty\subset\mathcal{M}(X)$ converges vaguely, setwisely or TV to some $\nu\in\mathcal{M}(X)$ as $n\rightarrow\infty$ if and only if every subsequence $\{\nu_{n_i}\}_{i=1}^\infty$ of $\{\nu_n\}_{n=1}^\infty$ contains a further subsequence  $\{\nu_{n_{i_j}}\}_{j=1}^\infty$, such that $\{\nu_{n_{i_j}}\}_{j=1}^\infty$ converges vaguely, setwisely or TV to the probability measure $\nu$ as $j\rightarrow\infty$ respectively.
\end{lemma}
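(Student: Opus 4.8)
The plan is to prove Lemma \ref{lem3}, which is a standard "Urysohn-type" subsequence criterion, simultaneously for all three modes of convergence (vague, setwise, TV). The key observation is that in each of these three cases, convergence $\nu_n \to \nu$ is equivalent to convergence of a net of real numbers of the form $\nu_n(A) \to \nu(A)$ (for $A$ ranging over $\mathcal{B}$, or over open/closed sets), or $\int_X f\,d\nu_n \to \int_X f\,d\nu$ (for $f$ in a suitable class), or $\Vert \nu_n - \nu\Vert_{TV} \to 0$. In every instance the statement reduces to the elementary fact about sequences in $\mathbb{R}$ (or in a metric space, for the TV case): a sequence $(a_n)$ converges to $a$ if and only if every subsequence has a further subsequence converging to $a$.

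First I would establish the forward direction, which is trivial: if $\nu_n$ converges (vaguely, setwisely, or TV) to $\nu$, then any subsequence converges to $\nu$ in the same sense, and hence serves as its own further subsequence. For the reverse direction, I would argue by contradiction. Suppose $\nu_n$ does \emph{not} converge to $\nu$ in the relevant sense. In the setwise case this means there is some $A \in \mathcal{B}$ with $\nu_n(A) \not\to \nu(A)$; hence there is $\epsilon > 0$ and a subsequence $\{\nu_{n_i}\}$ with $|\nu_{n_i}(A) - \nu(A)| \geq \epsilon$ for all $i$. But then no further subsequence $\{\nu_{n_{i_j}}\}$ can satisfy $\nu_{n_{i_j}}(A) \to \nu(A)$, so no further subsequence converges setwisely to $\nu$, contradicting the hypothesis. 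The vague case is identical with $A \in \mathcal{B}$ replaced by a test function $f \in C_{sc}(X)$ (equivalently $f \in C_0(X)$) and $\nu_n(A)$ replaced by $\int_X f\,d\nu_n$; the TV case is identical with the scalar quantity $\Vert \nu_n - \nu\Vert_{TV}$ and the observation that failure of convergence yields $\epsilon > 0$ and a subsequence staying $\epsilon$-far in TV norm, so no further subsequence can converge in TV norm.

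I do not expect any genuine obstacle here; the lemma is purely formal. The one point requiring a word of care is that in all three cases the \emph{negation} of convergence must be phrased as "there exists a subsequence bounded away from the limit on some single test object ($A$ or $f$ or the norm)", which is exactly the negation of the $\epsilon$-definition applied coordinatewise; since each mode of convergence is, by its definition in Section 2, convergence of a family of real-valued (or norm-valued) quantities, this is automatic. One should also remark that the argument is uniform in the topological space $X$ — no separability, metrizability, or local compactness is used — since at no stage do we pass to a countable determining family; we only use the defining testing families directly. This is what makes the statement valid for "any topological space $X$" as asserted.
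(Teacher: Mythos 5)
Your proof is correct and follows essentially the same route as the paper, which simply notes that the necessity is obvious and the sufficiency follows by reduction to absurdity (citing Billingsley, Theorem 2.6); your write-up supplies the details of exactly that contradiction argument, applied uniformly to the three testing families. No gap.
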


\begin{proof}
The necessity is obvious, while the sufficiency of the result follows from a simple argument of reduction to absurdity, see \cite[Theorem 2.6.]{Bil1}.
\end{proof}

Theorem \ref{thm1} together with Lemma \ref{lem3} indicate the following result on judging the setwise convergence of sequences of probability measures in $\mathcal{M}(X)$ with the ambient space being a compact metric space.

\begin{corollary}
Let $X$ be a compact metric space. Now if for a sequence of probability measures $\{\nu_n\}_{n=1}^\infty\subset\mathcal{M}(X)$, there is a subsequence $\{\nu_{n_i}\}_{i=1}^\infty$, such that
\begin{equation}\label{eq3}
\limsup_{i\rightarrow\infty} \nu_{n_i}(U)=\sup_{K\subset U, K \mbox{ is closed}}\limsup_{i\rightarrow\infty} \nu_{n_i}(K)
\end{equation}
for any open set $U\subset X$, and every setwisely convergent subsequence converges to the same probability measures $\nu\in\mathcal{M}(X)$, then  
\begin{center}
$\nu_n\stackrel{s}{\rightarrow}\nu$ 
\end{center}
as $n\rightarrow\infty$.
\end{corollary}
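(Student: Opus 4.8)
The plan is to combine Theorem \ref{thm1} with Lemma \ref{lem3} in a straightforward way. Since $X$ is a compact metric space, the hypotheses of Theorem \ref{thm1} and Lemma \ref{lem3} are available, and the whole argument is just a "subsequence of subsequence" manoeuvre.

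First I would fix an arbitrary subsequence $\{\nu_{n_i}\}_{i=1}^\infty$ of $\{\nu_n\}_{n=1}^\infty$; by Lemma \ref{lem3} it suffices to extract a further subsequence that converges setwisely to $\nu$. To do this, I would apply the hypothesis (which supplies a subsequence satisfying (\ref{eq3})) not to the original sequence but to the tail sequence $\{\nu_{n_i}\}_{i=1}^\infty$ — this is legitimate because the hypothesis is stated for the sequence $\{\nu_n\}$, but the condition (\ref{eq3}) only constrains the $\limsup$ behaviour and hence passes to any subsequence; more cleanly, I would first observe that the subsequence $\{\nu_{n_i}\}$ given by the hypothesis is itself a sequence to which Theorem \ref{thm1}'s proof applies. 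Concretely: the hypothesis gives a single subsequence $\{\nu_{n_{i_0}}\}$ of the full sequence satisfying (\ref{eq3}); any further subsequence of it also satisfies (\ref{eq3}) because $\limsup_{i} \nu_{m_i}(U) \le \limsup_i \nu_{n_i}(U) = \sup_{K\subset U}\limsup_i \nu_{n_i}(K) \le \sup_{K\subset U}\limsup_i \nu_{m_i}(K) \le \limsup_i \nu_{m_i}(U)$ whenever the former equality holds and... actually the monotonicity here needs a small remark, which I address below. Granting that, the proof of Theorem \ref{thm1} (or Theorem \ref{thm1} applied to the family $\Xi = \{\nu_{n_{i_0}}\}_{i_0}$, which is then setwisely relatively compact) produces a setwisely convergent further subsequence $\{\nu_{n_{i_j}}\}_{j=1}^\infty$, with some limit $\mu\in\mathcal{M}(X)$. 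By the second hypothesis — every setwisely convergent subsequence of $\{\nu_n\}$ has limit $\nu$ — we get $\mu=\nu$. Thus every subsequence of $\{\nu_n\}$ has a further subsequence converging setwisely to $\nu$, and Lemma \ref{lem3} yields $\nu_n\stackrel{s}{\rightarrow}\nu$.

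The one point requiring care is the claim that condition (\ref{eq3}) is inherited by subsequences, since passing to a subsequence can both lower the $\limsup$ on the left and raise the inner $\limsup$ on the right, so the equality is not obviously preserved in general. I would handle this by not trying to transfer (\ref{eq3}) blindly, but rather invoking Theorem \ref{thm1} in its "if" direction applied to the family $\Xi_0 := \{\nu_{n_{i_0}} : i_0\in\mathbb{N}\}$: the hypothesis says every sequence drawn from $\{\nu_n\}$ has a subsequence satisfying (\ref{eq3}), and in particular this applies to sequences drawn from $\Xi_0$; hence $\Xi_0$ is setwisely relatively compact by Theorem \ref{thm1}, so the sequence $\{\nu_{n_{i_0}}\}_{i_0}$ itself has a setwisely convergent subsequence. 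This sidesteps the monotonicity issue entirely. Alternatively one can note that once a subsequence satisfies (\ref{eq3}), the \emph{proof} of Theorem \ref{thm1} shows it already has a setwisely convergent sub-subsequence, so no re-transfer of (\ref{eq3}) is needed at all — this is the cleanest route.

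In summary, the skeleton is: (1) take an arbitrary subsequence; (2) by hypothesis pass to a sub-subsequence satisfying (\ref{eq3}); (3) apply (the proof of) Theorem \ref{thm1} to extract a setwisely convergent sub-sub-subsequence with limit $\mu$; (4) use the uniqueness hypothesis to conclude $\mu=\nu$; (5) invoke Lemma \ref{lem3} to upgrade to convergence of the full sequence. I do not expect any genuine obstacle; the only thing to be written carefully is step (3), making explicit that we are quoting the internal mechanism of Theorem \ref{thm1}'s proof (the diagonal extraction over $\bar{\mathcal{B}}_c$) rather than its statement, so that we genuinely obtain a convergent subsequence of the given sequence rather than merely the abstract assertion of relative compactness — though as noted the relative-compactness formulation also works.
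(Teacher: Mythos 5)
Your skeleton --- extract a setwisely convergent sub-subsequence from an arbitrary subsequence via Theorem \ref{thm1}, identify its limit as $\nu$ by the uniqueness hypothesis, and finish with Lemma \ref{lem3} --- is exactly the route the paper intends (the paper gives no more detail than that Theorem \ref{thm1} and Lemma \ref{lem3} ``indicate'' the result). The trouble is step (2) of your summary. The hypothesis, as written, supplies a \emph{single} subsequence $\{\nu_{n_i}\}$ of the full sequence satisfying (\ref{eq3}); it does not say that every sequence drawn from $\{\nu_n\}$ admits such a sub-subsequence. You correctly observe that (\ref{eq3}) need not pass to subsequences (the left side can drop and the inner $\limsup$ on the right can rise), but you then close the gap by asserting that ``the hypothesis says every sequence drawn from $\{\nu_n\}$ has a subsequence satisfying (\ref{eq3})'' --- a strictly stronger statement than what is assumed. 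Your ``cleanest route'' (run the internal mechanism of Theorem \ref{thm1}'s proof on the one given subsequence) produces only \emph{one} setwisely convergent subsequence of $\{\nu_n\}$ with limit $\nu$; Lemma \ref{lem3} requires a convergent sub-subsequence inside \emph{every} subsequence, and an arbitrary subsequence may share no terms with the distinguished one.

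This is not a gap you could have patched with more care: under the literal hypothesis the conclusion is false. On $X=[0,1]$ interleave $\nu_{2n-1}=\delta_0$ and $\nu_{2n}=\delta_{1/n}$. The constant subsequence $\{\delta_0\}$ satisfies (\ref{eq3}) for every open $U$ (take $K=\{0\}$ when $0\in U$); every setwisely convergent subsequence is eventually constant equal to $\delta_0$, since any subsequence meeting infinitely many even indices fails to converge setwisely exactly as in Example \ref{exa2}; yet for $A=\{1/n: n\in\mathbb{N}\}$ the values $\nu_n(A)$ oscillate between $0$ and $1$, so the full sequence does not converge setwisely. The corollary is only true under the Theorem-\ref{thm1}-style hypothesis that \emph{every} sequence drawn from the family $\{\nu_n\}$ has a subsequence satisfying (\ref{eq3}) --- i.e.\ that the family is setwisely relatively compact --- and with that reading your argument (relative compactness, uniqueness of subsequential limits, Lemma \ref{lem3}) is correct and is the intended one. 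You should state explicitly that you are strengthening the hypothesis rather than presenting the strengthening as what the hypothesis already says.
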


\section{Some further discussions on the topology on $\mathcal{M}(X)$}

In this section we intend to make a discussion on more kinds of topology on $\mathcal{M}(X)$, their relationships and their topological properties. We will formulate some open problems on our concerns.

One problem is on the comparison between various other topology on $\mathcal{M}(X)$. In the above sections we make a comparison between the the two types of vague topology, as well as the two types of setwise topology on the probability space $\mathcal{M}(X)$. Similar questions arise on the comparison between other kinds of topology on  $\mathcal{M}(X)$. For example, considering the Portemanteau theorem (see for example \cite[Theorem 13.16]{Kle}, \cite[Theorem 2.1]{Bil1} and \cite[Theorem 1.4.16]{HL1}), we define the following three types of weak topology on $\mathcal{M}(X)$ with the ambient space $X$ being a general topological space.

\begin{definition}
The \emph{Type-II weak topology} $\mathfrak{W}_{w2}$ on $\mathcal{M}(X)$ is the topology with subbasis
\begin{center}
$W_{w2}(\nu, A, \epsilon)=\{\varrho\in\mathcal{M}(X): |\varrho(A)-\nu(A)|<\epsilon \}$
\end{center}
for $A\in\mathcal{B}$ with $\nu(\partial A)=0$ and any real $\epsilon>0$. 
\end{definition}

\begin{definition}
The \emph{Alexandrov topology} $\mathfrak{W}_{w3}$ on $\mathcal{M}(X)$ is the topology with subbasis 
\begin{center}
$W_{w3}(\nu, A, \epsilon)=\{\varrho\in\mathcal{M}(X): \varrho(A)>\nu(A)-\epsilon \}$
\end{center}
for any open set $A\in\mathcal{B}$ and any real $\epsilon>0$. 
\end{definition}

Refer to \cite{Ale}, \cite{Bla} and \cite{Kal} for the Alexandrov topology on $\mathcal{M}(X)$. 

\begin{definition}
The \emph{Type-IV weak topology} $\mathfrak{W}_{w4}$ on $\mathcal{M}(X)$ is the topology with subbasis 
\begin{center}
$W_{w3}(\nu, A, \epsilon)=\{\varrho\in\mathcal{M}(X): \varrho(A)<\nu(A)+\epsilon \}$
\end{center}
for any closed set $A\in\mathcal{B}$ and any real $\epsilon>0$.  
\end{definition}

These types of weak topology are induced from  equivalent descriptions of the \emph{weakly sequential convergence} of measures in $\mathcal{M}(X)$ with a metric ambient space $X$, refer to \cite{Bil1, Bil2, Kallen2, Kallen5, Kle, Las2, Mat}.

\begin{pro}\label{prob1}
How is the comparison of the fineness between the Type-I, Type-II, the Alexandrov and Type-IV weak topology on $\mathcal{M}(X)$ with $X$ being a topological space?
\end{pro}

Another concern is on the properties of $\mathcal{M}(X)$ under various topology, for example, its separation properties, metrizability or completeness. Our Theorem \ref{thm3}, Theorem \ref{thm2} together with \cite[Theorem 4.2]{Kallen1} provide some conditions on the separability and metrizability of the vague topology on $\mathcal{M}(X)$ with certain assumptions on the ambient space. A remaining problem is the following one.

\begin{pro}\label{prob2}
In case that the ambient space $X$ is not a separable and complete metric space but its Borel $\sigma$-algebra $\mathcal{B}$ admits uncountably many elementary events, is the probability space $\mathcal{M}_{v1}(X)$ or $\mathcal{M}_{v2}(X)$ separable or metrizable?  
\end{pro}

It is well-known that the Type-II weak topology on the probability space $\mathcal{M}(X)$ can be induced by the \emph{Prohorov metric} if the ambient space $X$ is a separable and complete metric space, see \cite[p72]{Bil1}. According to Theorem \ref{thm8}, the probability space $\mathcal{M}(X)$ is metrizable in case the Borel $\sigma$-algebra $\mathcal{B}$ admits at most countably many elementary events, then it is natural to try to give an explicit metric which induces the setwise topology on  $\mathcal{M}(X)$ in  this cases.

\begin{pro}
In the context of Theorem \ref{thm8}, can one give an explicit metric which induces the setwise topology in case the probability space $\mathcal{M}(X)$ is metrizable? How is its relationship with the Prohorov metric and the TV metric on $\mathcal{M}(X)$?   
\end{pro}

We have got a necessary and sufficient condition on setwisely relative compactness of families of probabilities in $\mathcal{M}(X)$ with the ambient space $X$ being a compact metric space. The question on non-compact metric space or even non-metrizable space is still open.

\begin{pro}\label{prob3}
Can one give a necessary and sufficient condition for a family of probability measures in $\mathcal{M}(X)$ to be setwisely relatively compact in case the ambient space $X$ is a non-compact metric space or even a non-metrizable space?  
\end{pro}

We suspect that on a non-compact metric space, tightness of the family together with existence of a subsequence satisfying (\ref{eq3}) on any open $U\subset X$ for any sequence in the family may be sufficient for setwisely relative compactness of the family? 


\section{General $F$-topology and $S$-topology on $\mathcal{M}(X)$}

We study topological properties of $\mathcal{M}(X)$ under the vague, weak, setwise or TV topology in the above sections, however, it is possible that these four kinds of topologies are still not sensitive enough to deal with ones' concerning problems in due courses.  The concepts of $F$-topology and $S$-topology on $\mathcal{M}(X)$ allows one to work under its ideal topology on $\mathcal{M}(X)$ in one's need.

\begin{definition}
Let $F$ be some family of functions from $X$ to $\mathbb{R}$.  The $F$-topology $\mathfrak{W}_{F}$ on $\mathcal{M}(X)$ is the topology with basis 
\begin{center}
$W_{F}(\nu,f, \varepsilon)=\{\varrho\in\mathcal{M}(X): |\int_X f(x) d\varrho-\int_X f(x) d\nu|<\epsilon \}$
\end{center}
for any $f\in F$ and any real $\epsilon>0$. 
\end{definition}

\begin{definition}
Let $S\subset\mathcal{B}$ be some family of Borel sets. The $S$-topology $\mathfrak{W}_{S}$ on $\mathcal{M}(X)$ is the topology with subbasis
\begin{center}
$W_{S}(\nu, A, \epsilon)=\{\varrho\in\mathcal{M}(X): |\varrho(A)-\nu(A)|<\epsilon \}$
\end{center}
for any $A\in S$ and any real $\epsilon>0$. 
\end{definition}

By suitable choices of different families of functions $F$ or sets $S$, one can define the induced topology on $\mathcal{M}(X)$, which may crucially result in solutions of one's problems in due courses. Theoretically, it is also interesting to ask how to interpret the $F$-topology in words of the $S$-topology, or vice versa, for particular families of $F$ or $S$. Some algebraic structures (such as lattice structure) are possible on the collections of the $F$-topology and the $S$-topology, in view of the natural algebraic structures on the collections of families $F$ and $S$ separately. Separability, metrizability and relative compactness will vary depending on different choices of families of functions $F$ or sets $S$.

\end{document}